\theoremstyle{plain}
\newtheorem{theorem}{Theorem}
\newtheorem{lemma}{Lemma}
\theoremstyle{definition}
\newtheorem{definition}{Definition}
\newtheorem{remark}{Remark}
\renewcommand{\Im}{{\rm Im\,}}
\newcommand{\ind}{{\rm ind\,}}
\newcommand{\supp}{{\rm supp\,}}
\newcommand{\Dom}{{\rm D\,}}
\renewcommand{\ker}{{\rm ker\,}}
\renewcommand{\dim}{{\rm dim\,}}
\newcommand{\codim}{{\rm codim\,}}
\newcommand{\const}{{\rm const}}
\newcommand{\dist}{{\rm dist}}
\title{On the index instability\\ for some
nonlocal elliptic problems}
\author{Pavel Gurevich\thanks{Supported by the
Russian Foundation for Basic Research (project No.~04-01-00256)
and by the Russian President's grant (project No.~MK-980.2005.1).}
\thanks{Email address: gurevichp@yandex.ru}}
\date{}
\begin{document}

\maketitle

\begin{abstract}
The Fredholm index of unbounded operators defined on generalized
solutions of nonlocal elliptic problems in plane bounded domains
is investigated. It is known that nonlocal terms with smooth
coefficients having zero of a certain order at the conjugation
points do not affect the index of the unbounded operator. In this
paper, we construct examples showing that the index may change
under nonlocal perturbations with coefficients not vanishing at
the points of conjugation of boundary-value conditions.
\end{abstract}

\section{Introduction}

The first one who began to study nonlocal problems in
multidimensional case was Carleman. In~\cite{Carleman}, the
problem of finding a function harmonic on a two-dimensional
bounded domain and subjected to a nonlocal condition connecting
the values of this function at different points of the boundary is
considered. Bitsadze and Smarskii~\cite{BitzSam} suggested another
setting of a nonlocal problem arising in plasma theory: to find a
function harmonic on a bounded domain and satisfying nonlocal
conditions on shifts of the boundary that can take points of the
boundary inside the domain. Different generalizations of the above
nonlocal problems were investigated by many authors
(see~\cite{SkBook} and references therein).

It turns out that the most difficult situation occurs if the
support of nonlocal terms intersects the boundary. In this case,
solutions of nonlocal problems can have power-law singularities
near some points even if the boundary and the right-hand sides are
infinitely smooth~\cite{SkMs86}. For this reason, such problems
are naturally studied in weighted spaces introduced by
Kondrat'ev~\cite{KondrTMMO67} for boundary-value problems in
nonsmooth domains. In particular, the Fredholm solvability of
general nonlocal elliptic problems in weighted spaces is
investigated by Skubachevskii~\cite{SkMs86,SkDu90,SkDu91} and his
pupils.

In \cite{GurMatZam05}, an unbounded operator $\mathbf P: L_2(G)\to
L_2(G)$ corresponding to an elliptic equation of order $2m$ in a
plane bounded domain $G\subset\mathbb R^2$ with nonlocal
boundary-value conditions is studied; the operator $\mathbf P$ is
defined on generalized solutions of the nonlocal problem, i.e.,
the domain $\Dom(\mathbf P)$ consists of the functions $u\in
W_2^m(G)$ that satisfy nonlocal conditions in the sense of traces
and for which $\mathbf P u\in L_2(G)$. In particular, it is proved
that the operator $\mathbf P$ has the Fredholm property.

In~\cite{GurTrMIRAN}, we investigate how lower-order terms in
elliptic equations and nonlocal perturbations in boundary-value
conditions affect the index of the unbounded operator $\mathbf P$.
It is proved that the index of $\mathbf P$ does not change if we
add nonlocal terms with smooth coefficients having zero of a
certain order at the points of conjugation of boundary-value
conditions. It is also proved that lower-order terms in elliptic
equation have no influence on the index.

In this paper, we construct examples showing that the index may
change under nonlocal perturbations with coefficients not
vanishing at the points of conjugation of boundary-value
conditions (even if the coefficients in the perturbations are
arbitrarily small). The case where the support of nonlocal terms
contains the conjugation points and the case where it does not are
both considered.

The reason of the index instability is that nonlocal terms change
the domain of the corresponding unbounded operator; it turns out
that if the coefficients at nonlocal terms do not vanish at the
conjugation points, then these terms cannot be reduced to
(relatively) small or compact perturbations.

\section{Preliminaries}

Let $G$ be a bounded domain in $\mathbb R^2$, and let
$$\partial G\setminus\{g_1,g_2\}=\Gamma_1\cup\Gamma_2,$$ where
$\Gamma_i$ are open (in the topology of $\partial G$) infinitely
smooth curves,
$\overline{\Gamma_1}\cap\overline{\Gamma_2}=\{g_1,g_2\}$, and
$g_1$ and $g_2$ are the endpoints of the curves
$\overline{\Gamma_1}$ and $\overline{\Gamma_2}$. We assume that
the domain $G$ is a plane angle of nonzero opening in a
neighborhood of the points $g_j$.

For integer $k\ge0$, we denote by $W_2^k(G)$ the Sobolev space
with the norm
$$
\|u\|_{W_2^k(G)}=\left(\sum\limits_{|\alpha|\le k}\int\limits_G
|D^\alpha u|^2\,dy\right)^{1/2}
$$
(set $W_2^0(G)=L_2(G)$ for $k=0$). For integer $k\ge1$, we
introduce the space $W_2^{k-1/2}(\Gamma)$ of traces on a smooth
curve $\Gamma\subset\overline{G}$ with the norm
\begin{equation}\label{eqTraceNormW}
\|\psi\|_{W_2^{k-1/2}(\Gamma)}=\inf\|u\|_{W_2^k(G)},
\end{equation}
where the infimum is taken over all $u\in W_2^k(G)$ such that
$u|_\Gamma=\psi$.

Let $C_0^\infty(\overline{G}\setminus\{g_1,g_2\})$ denote the set
of functions infinitely differentiable on $\overline{G}$ and
vanishing near the points $g_1$ and $g_2$.

We introduce the Kondrat'ev space $H_a^k(G)$ as the completion of
the set $C_0^\infty(\overline{G}\setminus\{g_1,g_2\})$ with
respect to the norm
$$
\|u\|_{H_a^k(G)}=\left(\sum\limits_{|\alpha|\le s}\int\limits_G
\rho^{2(a+|\alpha|-k)}|D^\alpha u|^2\, dx\right)^{1/2},
$$
where $k\ge0$, $a\in\mathbb R$, and
$\rho(y)=\dist(y,\{g_1,g_2\})$.

Denote by $H_a^{k-1/2}(G)$ ($k\ge1$ is an integer) the space of
traces on a smooth curve $\Gamma\subset\overline G$ with the norm
$$
\|\psi\|_{H_a^{k-1/2}(\Gamma)}=\inf\|u\|_{H_a^k(G)},
$$
where the infimum is taken over all $u\in H_a^k(G)$ such that
$u|_\Gamma=\psi$.

It is clear that $H_0^k(G)\subset W_2^k(G)$ for integer
nonnegative $k$ and $H_0^{k-1/2}(\Gamma)\subset
W_2^{k-1/2}(\Gamma)$ for integer $k\ge1$. In the sequel, we also
need the following result about the embedding of Sobolev spaces
into weighted spaces.

\begin{lemma}[see Lemma 4.11 in \cite{KondrTMMO67}]\label{lPsi=P1+Phi}
The operators of embedding $W_2^{3/2}(\Gamma_j)\subset
H_{a+1}^{3/2}(\Gamma_j)$, $j=1,2$, are bounded for all $a>0$.
\end{lemma}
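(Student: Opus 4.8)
The plan is to reduce the assertion about trace spaces to the ``volume'' embedding $W_2^2(G)\subset H_{a+1}^2(G)$ and to prove the latter by inspecting the weights together with the two-dimensional Sobolev embedding. Recall that, by~\eqref{eqTraceNormW} and the analogous definition in weighted spaces, $\|\psi\|_{W_2^{3/2}(\Gamma_j)}=\inf\|u\|_{W_2^2(G)}$ and $\|\psi\|_{H_{a+1}^{3/2}(\Gamma_j)}=\inf\|u\|_{H_{a+1}^2(G)}$, the infima being taken over all extensions $u$ with $u|_{\Gamma_j}=\psi$. Hence, once we know that every $u\in W_2^2(G)$ belongs to $H_{a+1}^2(G)$ with $\|u\|_{H_{a+1}^2(G)}\le C\|u\|_{W_2^2(G)}$ and $C$ independent of $u$, the lemma follows: given $\psi\in W_2^{3/2}(\Gamma_j)$, pick $u\in W_2^2(G)$ with $u|_{\Gamma_j}=\psi$ and $\|u\|_{W_2^2(G)}\le 2\|\psi\|_{W_2^{3/2}(\Gamma_j)}$; then $u$ is an admissible extension in the definition of $\|\psi\|_{H_{a+1}^{3/2}(\Gamma_j)}$, so $\|\psi\|_{H_{a+1}^{3/2}(\Gamma_j)}\le\|u\|_{H_{a+1}^2(G)}\le 2C\|\psi\|_{W_2^{3/2}(\Gamma_j)}$.

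To establish the volume embedding I would simply write out the norm. Since $\rho(y)^{2((a+1)+|\alpha|-2)}$ equals $\rho^{2a-2}$, $\rho^{2a}$, $\rho^{2a+2}$ for $|\alpha|=0,1,2$,
\[
\|u\|_{H_{a+1}^2(G)}^2=\int_G\rho^{2a-2}|u|^2\,dy+\int_G\rho^{2a}|\nabla u|^2\,dy+\int_G\rho^{2a+2}|D^2 u|^2\,dy .
\]
As $\rho$ is bounded on $\overline G$ and the exponents $2a$ and $2a+2$ are nonnegative (here $a>0$ is used, for the first-derivative term), the last two integrals are $\le C\|u\|_{W_2^2(G)}^2$. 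For the first integral I would invoke the Sobolev embedding $W_2^2(G)\hookrightarrow C(\overline G)$ (valid for the domain $G$ under consideration, which near each $g_j$ is a plane angle and is smooth elsewhere) to get $|u(y)|\le C\|u\|_{W_2^2(G)}$ pointwise, whence $\int_G\rho^{2a-2}|u|^2\,dy\le C\|u\|_{W_2^2(G)}^2\int_G\rho^{2a-2}\,dy$; and $\int_G\rho^{2a-2}\,dy<\infty$, because away from $g_1,g_2$ the integrand is bounded while on a small disc $B_\delta(g_j)\cap G$, in polar coordinates centred at $g_j$, $\int_{B_\delta(g_j)\cap G}\rho^{2a-2}\,dy\le\const\int_0^\delta r^{2a-2}\cdot r\,dr=\const\int_0^\delta r^{2a-1}\,dr<\infty$, convergence holding precisely because $a>0$.

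The one point needing a little care is that the extension $u$, a priori only a member of $W_2^2(G)$, actually lies in $H_{a+1}^2(G)$ as defined, i.e.\ in the completion of $C_0^\infty(\overline G\setminus\{g_1,g_2\})$. Multiplying $u$ by a cut-off $\chi_\varepsilon$ that vanishes on $B_\varepsilon(g_j)$, equals $1$ off $B_{2\varepsilon}(g_j)$ and obeys $|D^\beta\chi_\varepsilon|\le\const\,\varepsilon^{-|\beta|}$, one checks that $(1-\chi_\varepsilon)u\to0$ in $H_{a+1}^2(G)$ as $\varepsilon\to0$ — the critical term $\varepsilon^{-4}\int_{B_{2\varepsilon}(g_j)}\rho^{2a+2}|u|^2\,dy\le\const\,\varepsilon^{-4}\|u\|_{C(\overline G)}^2\,\varepsilon^{2a+4}=\const\,\varepsilon^{2a}\to0$ again uses $a>0$ — and $\chi_\varepsilon u$, supported away from $g_1,g_2$ where the weighted norm is equivalent to the ordinary $W_2^2$-norm, is then approximated by functions of $C_0^\infty(\overline G\setminus\{g_1,g_2\})$ by ordinary mollification.

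I do not expect a genuine obstacle beyond this bookkeeping: the whole content of the statement is the exponent count showing that $\rho^{2a-2}$ is integrable over the two-dimensional domain exactly when $a>0$, the extra factor $r$ from the area element being decisive. Correspondingly the hypothesis $a>0$ is sharp: for $a=0$ the integral $\int_0^\delta r^{-1}\,dr$ diverges logarithmically, so $W_2^{3/2}(\Gamma_j)$ is not contained in $H_1^{3/2}(\Gamma_j)$. All the remaining steps — the reduction through the two infimum definitions, the Sobolev embedding, and the cut-off/mollification density argument — are routine.
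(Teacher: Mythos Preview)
Your argument is correct. The paper does not actually prove this lemma; it simply quotes it as Lemma~4.11 of Kondrat'ev~\cite{KondrTMMO67}. Your self-contained route---reducing to the volume embedding $W_2^2(G)\hookrightarrow H_{a+1}^2(G)$ via the two infimum definitions, handling the zeroth-order weight $\rho^{2a-2}$ through the two-dimensional Sobolev embedding $W_2^2(G)\hookrightarrow C(\overline G)$ and the integrability of $r^{2a-1}$ near the corners, and then verifying membership in the completion by the cut-off $\chi_\varepsilon$---is a clean and standard way to unpack the citation. The exponent bookkeeping and the identification of $\varepsilon^{2a}$ as the decisive rate in the density step are both right. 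One small remark: your closing comment on sharpness is only heuristic---the divergence of $\int_0^\delta r^{-1}\,dr$ shows that \emph{this particular estimate} fails at $a=0$, not directly that the embedding itself fails---but that lies outside the lemma and does not affect the proof.
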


To conclude this section, we formulate a theorem about small
perturbations of Fredholm operators in Hilbert spaces.

Let $H_1$ and $H_2$ be Hilbert spaces, and let $ P:\Dom(P)\subset
H_1\to H_2 $ be a linear (generally speaking, unbounded) operator.

\begin{definition}\label{defFredholm}
The operator $P$ is said {\it to have the Fredholm property} if it
is closed, its image is closed, and the dimension of its kernel
$\ker P$ and the codimension of its image $\mathcal R(P)$ are
finite. The number $\ind P=\dim\ker P-\codim\mathcal R(P)$ is
called the {\it index} of the Fredholm operator $P$.
\end{definition}

Let $ A: H_1\to H_2 $ be a linear operator.

\begin{theorem}[see Sec.~16 in~\cite{Kr}]\label{thAppBTheorSec16Kr} Let the
operator $P$ have the Fredholm property, $A$ be bounded, and
$\Dom(A)=H_1$. Then the operator $P+A$ has the Fredholm property,
$\ind(P+A)=\ind P$, $\dim\ker(P+A)\le\dim\ker P$, and
$\codim\mathcal R(P+A)\le \codim\mathcal R(P)$, provided that
$\|A\|$ is sufficiently small.
\end{theorem}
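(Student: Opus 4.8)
The plan is to reduce the statement to the standard ``regularizer'' (parametrix) technique: first attach to $P$ a bounded generalized inverse $R$, then treat $P+A$ as a perturbation of $P$ at the level of this regularizer. For the first step I would fix the orthogonal splittings $H_1=\ker P\oplus(\ker P)^{\perp}$ and $H_2=\mathcal R(P)\oplus M$, where $M$ is a finite-dimensional complement of the closed subspace $\mathcal R(P)$, and denote by $\Pi$ and $Q$ the corresponding orthogonal projections onto $\ker P$ and $M$; both are of finite rank. The restriction $P_0$ of $P$ to $\Dom(P)\cap(\ker P)^{\perp}$ is a bijection onto $\mathcal R(P)$; since $P$ is closed, this domain is complete in the graph norm and the Banach inverse theorem shows that $P_0^{-1}$ is bounded, so $R:=P_0^{-1}(I-Q)\colon H_2\to H_1$ is a bounded, everywhere defined operator with $\mathcal R(R)\subset\Dom(P)$ and
\[
RP=I-\Pi\ \text{on}\ \Dom(P),\qquad PR=I-Q\ \text{on}\ H_2 .
\]

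Next I would establish the Fredholm property of $P+A$ and the two inequalities. Closedness is immediate: since $A$ is bounded and everywhere defined, if $u_n\to u$ in $H_1$ with $(P+A)u_n$ convergent, then $Pu_n$ is convergent, and closedness of $P$ gives $u\in\Dom(P)$ with the correct limit. Assuming $\|A\|<1/\|R\|$, the Neumann series makes $I+RA$ and $I+AR$ boundedly invertible, with inverses $S$ and $T$, and the two displayed identities yield
\[
SR\,(P+A)=I-S\Pi\ \text{on}\ \Dom(P),\qquad (P+A)\,RT=I-QT\ \text{on}\ H_2 ,
\]
where $S\Pi$ and $QT$ are of finite rank. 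The left identity forces $\ker(P+A)\subset\mathcal R(S\Pi)$, hence $\dim\ker(P+A)\le\rank\Pi=\dim\ker P$; the right identity gives $\mathcal R(P+A)\supset\mathcal R(I-QT)$, and since $I-QT$ is a finite-rank perturbation of the identity on $H_2$ its range is closed of codimension $\le\rank Q$, so $\mathcal R(P+A)$ is closed of codimension $\le\codim\mathcal R(P)$. Together with closedness this gives the Fredholm property and both inequalities.

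The remaining — and genuinely delicate — point is the equality $\ind(P+A)=\ind P$: the regularizer identities only bound $\dim\ker$ and $\codim\mathcal R$ from above (and indeed each may strictly drop), so the \emph{signed} count requires an extra input. Here I would pass to the graph-norm Hilbert space $\mathcal H:=(\Dom(P),\|\cdot\|_{\mathrm{graph}})$; because $A$ is globally bounded, the graph norms of $P$ and of $P+A$ are equivalent, so both act as \emph{bounded} operators $\mathcal H\to H_2$ with the same kernels and cokernels, hence the same indices, as the unbounded ones, and $t\mapsto P+tA$ is a norm-continuous path in $\mathcal B(\mathcal H,H_2)$ lying, by the previous step, entirely in the set of Fredholm operators. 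Invoking the classical fact that the Fredholm index is locally constant on the open set of bounded Fredholm operators — equivalently, using multiplicativity of the index under composition together with the bounded Fredholm operator $RT\colon H_2\to\mathcal H$, for which $(P+A)\circ(RT)=I-QT$ has index $0$ while $\ind(RT)=\codim\mathcal R(P)-\dim\ker P=-\ind P$ — one concludes $\ind(P+A)=\ind P$. I expect this last step, converting the qualitative regularizer bounds into the exact index identity via a homotopy/composition invariance argument, to be the main obstacle; everything preceding it is essentially bookkeeping.
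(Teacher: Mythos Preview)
The paper does not actually prove this theorem: it is quoted verbatim from Krein's monograph (the bracketed citation ``see Sec.~16 in~\cite{Kr}'' is the entire justification), so there is no proof in the paper to compare your proposal against. Your argument is the standard parametrix proof and is correct as written; the regularizer identities $SR(P+A)=I-S\Pi$ and $(P+A)RT=I-QT$ are verified exactly as you state, and they yield closedness, the kernel bound, and the cokernel bound directly.

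One small remark on the final step. Appealing to ``the classical fact that the Fredholm index is locally constant on bounded Fredholm operators'' is, strictly speaking, close to what you are asked to prove (merely transported to the bounded setting), so it is the multiplicativity route that carries the real weight. That route is fine: $R$ is bounded as a map $H_2\to\mathcal H$ because $PR=I-Q$, it is Fredholm with $\ker R=\mathcal R(Q)$ and $\mathcal R(R)=(\ker P)^{\perp}\cap\Dom(P)$ (closed in $\mathcal H$, with complement $\ker P$), hence $\ind R=-\ind P$; since $T$ is invertible, $\ind(RT)=-\ind P$, and the identity $(P+A)(RT)=I-QT$ together with multiplicativity and $\ind(I-QT)=0$ gives $\ind(P+A)=\ind P$. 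You might streamline the exposition by committing to this computation and dropping the homotopy phrasing, since it is self-contained and avoids any appearance of circularity.
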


\section{The index instability in the case where
the support of nonlocal terms contains the conjugation points}

{\bf 1.} Let $G$, $\Gamma_i$, and $g_j$ be the same as above. Let
a number ${\varepsilon}>0$ be so small that the
$\varepsilon$-neighborhoods  $\mathcal O_{\varepsilon}(g_j)$ of
the points $g_j$ do not intersect with each other. We assume that
the set $G\cap \mathcal O_{\varepsilon}(g_j)$ coincides with the
plane angle of opening $2\omega_0$, where $0<\omega_0<\pi$.
Consider the following nonlocal problem in the domain $G$:
\begin{equation}\label{eqEx1_61}
 \Delta u=f(y),\quad y\in G,
\end{equation}
\begin{equation}\label{eqEx1_62}
u|_{\Gamma_1}-(1+t) u(\Omega_1(y))|_{\Gamma_1}=0,\qquad
u|_{\Gamma_2}-(1-t) u(\Omega_2(y))|_{\Gamma_2}=0.
\end{equation}
Here $t\in\mathbb C$ is a parameter and $\Omega_i$ is a
$C^\infty$-diffeomorphism defined on a neighborhood of $\Gamma_i$.
We assume that $\Omega_i(\Gamma_i)\subset G$, $\Omega_i(g_1)=g_1$,
$\Omega_i(g_2)=g_2$, and the transformation $\Omega_i$ is the
rotation by the angle $\omega_0$ inside the domain $G$ near the
points $g_1$ and $g_2$ (see Fig.~\ref{figEx1}).
\begin{figure}[ht]
{ \hfill\epsfxsize10cm\epsfbox{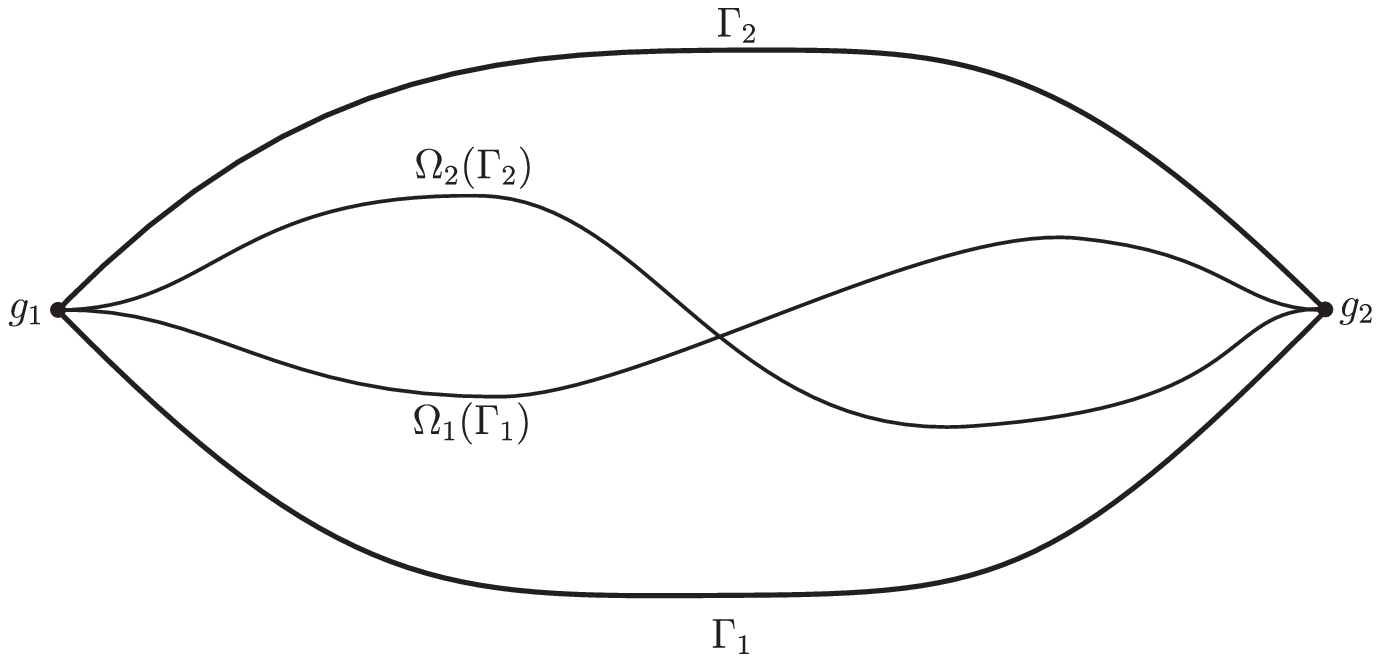}\hfill\ }
\caption{Problem~\eqref{eqEx1_61}, \eqref{eqEx1_62}}
\label{figEx1}
\end{figure}

We say that a function $u\in W_2^1(G)$ is a {\it generalized
solution} of problem~\eqref{eqEx1_61}, \eqref{eqEx1_62} with
right-hand side $f\in L_2(G)$ if $u$ satisfies
Eq.~\eqref{eqEx1_61} in the sense of distributions and nonlocal
conditions~\eqref{eqEx1_62} in the sense of traces.

\smallskip

Consider the unbounded operator $\mathbf P_t: \Dom(\mathbf
P_t)\subset L_2(G)\to L_2(G)$ given by
$$
 \mathbf P_t u=\Delta u, \qquad u\in \Dom(\mathbf P_t),
$$
where
$$
\Dom(\mathbf P_t)=\{u\in
 W_2^1(G):\ \Delta u\in L_2(G)\ \text{and}\ u\ \text{satisfies~\eqref{eqEx1_62}}
 \}.
$$
The operator $\mathbf P_t$ has the Fredholm property for all
$t\in\mathbb C$ by Theorem~2.1 in~\cite{GurMatZam05}.

Let us prove the following result.

\begin{theorem}\label{thEx1_IndUnstab}
There is a number $t_0>0$ such that $\ind \mathbf P_0>\ind\mathbf
P_t=\const$ for $0<|t|\le t_0$.
\end{theorem}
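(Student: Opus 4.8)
The plan is to reduce the study of the operators $\mathbf P_t$ to the analysis of their behaviour in small neighbourhoods $\mathcal O_\varepsilon(g_j)$ of the conjugation points, where the nonlocal terms become rotations by the fixed angle $\omega_0$, and then to apply the Kondrat'ev--type machinery for model operators in a plane angle. Concretely, I would first recall (from the theory underlying Theorem~2.1 in \cite{GurMatZam05} and the results of \cite{SkMs86,SkDu90}) that the Fredholm property and, more importantly, the \emph{index} of $\mathbf P_t$ are governed by an associated model operator in the strip obtained by the Mellin transform $y\mapsto(\ln\rho,\varphi)$ near each $g_j$. For the Laplacian with the nonlocal conditions \eqref{eqEx1_62}, this model operator is an operator pencil $\mathcal L_t(\lambda)$ acting on functions on the interval $(-\omega_0,\omega_0)$ (the cross-section of the angle of opening $2\omega_0$), in which the rotation $\Omega_i$ by $\omega_0$ contributes a shift $\varphi\mapsto\varphi\mp\omega_0$ carrying the boundary $\varphi=\pm\omega_0$ to the interior line $\varphi=0$, together with the coefficients $1+t$ and $1-t$. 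The index jump of $\mathbf P_t$ as $t$ crosses $0$ is then controlled by the eigenvalues of $\mathcal L_t(\lambda)$ that cross the appropriate weight line $\Re\lambda=1-a$ (here the generalized-solution setting corresponds to the line dictated by $W_2^1$, i.e. to the critical value $\lambda=0$ or $\lambda=1$ in the Mellin variable).

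Second, I would compute the eigenvalues of the pencil explicitly. Solutions of $\Delta u=0$ homogeneous of degree $\lambda$ in the angle are spanned by $r^\lambda e^{\pm i\lambda\varphi}$, so an eigenfunction of $\mathcal L_t(\lambda)$ has the form $u=r^\lambda(c_1 e^{i\lambda\varphi}+c_2 e^{-i\lambda\varphi})$, and the two nonlocal conditions \eqref{eqEx1_62}, after substituting the rotation $\varphi=\pm\omega_0\mapsto\varphi=0$, become a $2\times2$ linear system in $(c_1,c_2)$ whose determinant $D_t(\lambda)$ is an explicit entire function of $\lambda$ depending analytically on $t$. At $t=0$ the system degenerates to the two \emph{local} Dirichlet-type relations $u|_{\varphi=\pm\omega_0}=u|_{\varphi=0}$; I expect $D_0(\lambda)$ to have a root on (or an extra root migrating to) the critical line, reflecting the larger domain and hence larger kernel/cokernel contribution of $\mathbf P_0$, while for $t\neq0$ the coefficients $1\pm t$ perturb $D_t(\lambda)$ so that this root leaves the critical line. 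The key quantitative point to extract is: (i) for $t=0$ there is an eigenvalue of the pencil exactly on the weight line, producing the strict inequality $\ind\mathbf P_0>\ind\mathbf P_t$; and (ii) for all small $t\neq0$ that eigenvalue moves off the line \emph{to the same side} (independently of $\arg t$), so that $\ind\mathbf P_t$ is the same constant for all $0<|t|\le t_0$ — this last ``same side'' claim is where I would do the careful perturbation-of-roots computation, expanding $D_t(\lambda)\approx D_0(\lambda)+t\,\partial_t D_t(\lambda)|_{t=0}+\cdots$ near the critical root and checking the sign of the resulting displacement in $\Re\lambda$.

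Third, having pinned down the pencil, I would invoke the standard index formula for nonlocal elliptic problems: the difference of indices of two such operators that differ only near the conjugation points equals (up to sign) the number of eigenvalues of the corresponding pencils lying in the strip between the old and new weight lines, counted with multiplicity. Since the nonlocal terms in \eqref{eqEx1_62} vanish nowhere near $g_j$, Theorem~\ref{thAppBTheorSec16Kr} does \emph{not} apply to pass from $\mathbf P_0$ to $\mathbf P_t$ (this is exactly the mechanism described in the introduction: the perturbation is not small or compact because it changes $\Dom(\mathbf P_t)$), so the index is genuinely allowed to jump; the pencil computation then shows it \emph{does} jump, and by exactly the amount coming from the single critical eigenvalue. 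Combining: $\ind\mathbf P_t$ is locally constant in $t$ away from $0$ (the eigenvalues depend continuously on $t$ and none other crosses the line for small $t$), equals that constant for $0<|t|\le t_0$, and is strictly smaller than $\ind\mathbf P_0$.

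The main obstacle I anticipate is step two's ``same side for all $\arg t$'' assertion: a priori the root of $D_t$ could move into $\Re\lambda>1-a$ for some arguments of $t$ and into $\Re\lambda<1-a$ for others, which would make $\ind\mathbf P_t$ non-constant and contradict the theorem. Resolving this requires showing that the leading-order displacement of the critical root is governed by $|t|^2$ (or by a term whose sign is $\arg t$-independent) rather than by $\Re t$ or $\Im t$ linearly — presumably because the symmetric choice of coefficients $1+t$ and $1-t$ in the two conditions makes the first-order term in $t$ vanish at the critical root, or shifts the root purely along the critical line. Verifying this cancellation, and then reading off the correct sign of the $|t|^2$ term, is the technical heart of the argument; everything else is bookkeeping with the known Fredholm and index theory for nonlocal problems in weighted spaces.
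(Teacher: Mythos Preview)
Your proposed mechanism is wrong at its core. The eigenvalues of the model pencil \eqref{eqEx1_61Lambda}--\eqref{eqEx1_62Lambda} do \emph{not} move with $t$: if you apply the two nonlocal conditions to $\varphi(\omega)=c_1 e^{\lambda\omega}+c_2 e^{-\lambda\omega}$ and add and subtract the resulting equations, the parameter $t$ drops out of the characteristic determinant entirely, and the eigenvalues are $\lambda_k=\pi k i/\omega_0$ for every $t$. So there is no root to track across the weight line, your ``same side for all $\arg t$'' worry never arises, and the index formula that counts pencil eigenvalues crossing between two weight lines yields zero and tells you nothing. The entire perturbation-of-roots program in your second and third steps is aimed at a phenomenon that does not occur in this example.

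What actually changes with $t$ is the \emph{eigenvector} at the fixed eigenvalue $\lambda_0=0$: one finds $\varphi_0(\omega)=1-t\omega/\omega_0$. For $t=0$ this is the constant $1$, which lies in $W_2^1$ near each corner; for $t\ne0$ it has nontrivial angular dependence, so $|\nabla\varphi_0|\sim|t|/r$ and $\varphi_0\notin W_2^1(G\cap\mathcal O_\varepsilon(g_j))$. The paper exploits this as follows. It introduces the \emph{bounded} operator $\mathbf N_t:H_0^2(G)\to H_0^0(G)\times H_0^{3/2}(\Gamma_1)\times H_0^{3/2}(\Gamma_2)$, to which Theorem~\ref{thAppBTheorSec16Kr} \emph{does} apply, so $\ind\mathbf N_t$ is constant; a maximum-principle argument gives $\ker\mathbf N_0=0$, hence $\ker\mathbf N_t=0$ and $\codim\mathcal R(\mathbf N_t)=\const$ for $|t|\le t_0$. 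The asymptotic expansion $u=c_j\varphi_0(\omega)+d_j\varphi_0(\omega)\ln r+v_j$ with $v_j\in H_0^2$ then shows that for $t\ne0$ every generalized solution $u\in W_2^1(G)$ is forced to have $c_j=d_j=0$ and hence lies in $H_0^2(G)$; this identifies $\mathcal R(\mathbf P_t)$ with $\{f:(f,0,0)\in\mathcal R(\mathbf N_t)\}$ and gives $\ker\mathbf P_t=\ker\mathbf N_t=0$ for $0<|t|\le t_0$. For $t=0$, on the other hand, the constant function $u\equiv1$ lies in $\ker\mathbf P_0$ (but not in $H_0^2(G)$), giving $\dim\ker\mathbf P_0>0$, while one still has $\mathcal R(\mathbf P_0)\supset\{f:(f,0,0)\in\mathcal R(\mathbf N_0)\}$ and hence $\codim\mathcal R(\mathbf P_0)\le\codim\mathcal R(\mathbf P_t)$. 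Combining these yields the strict inequality $\ind\mathbf P_0>\ind\mathbf P_t=\const$.
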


{\bf 2.} It is known (see, e.g.,~\cite{SkMs86}) that the behavior
of solutions for problem~\eqref{eqEx1_61}, \eqref{eqEx1_62} near
the points $g_1$ and $g_2$ depends on the location of eigenvalues
of model problems (with a parameter) corresponding to these
points. To write the model problem corresponding to the point
$g_1$, we assume that $g_1$ is the origin and the axis $Ox_1$
coincides with the bisectrix of the angle formed by the boundary
of the domain $G$ near the point $g_1$. Consider
problem~\eqref{eqEx1_61}, \eqref{eqEx1_62} for $y\in\mathcal
O_\varepsilon(g_1)$, formally setting $f=0$:
\begin{equation*}\label{eqEx1_61Eps}
 \Delta u=0,\quad y\in\mathcal
O_\varepsilon(g_1),
\end{equation*}
\begin{equation*}\label{eqEx1_62Eps}
u(-\omega_0,r)-(1+t)u(0,r)=0,\quad
u(\omega_0,r)-(1-t)u(0,r)=0,\quad 0<r<\varepsilon,
\end{equation*}
where $(\omega,r)$ are the polar coordinates of the point $y$.
Further, writing the Laplace operator in the polar coordinates,
setting $\tau=\ln r$ and formally applying the Fourier transform
$u(\omega,\tau)\mapsto\tilde u(\omega,\lambda)$, we obtain the
following model nonlocal problem with the parameter
$\lambda\in\mathbb C$:
\begin{equation}\label{eqEx1_61Lambda}
\varphi''-\lambda^2\varphi=0,\quad |\varphi|<\omega_0,
\end{equation}
\begin{equation}\label{eqEx1_62Lambda}
\varphi(-\omega_0)-(1+t)\varphi(0)=0,\quad
\varphi(\omega_0)-(1-t)\varphi(0)=0,
\end{equation}
where $\varphi(\omega)=\tilde u(\omega,\lambda)$ for a fixed
$\lambda$. Clearly, the same problem corresponds to the point
$g_2$. The eigenvectors of problem~\eqref{eqEx1_61Lambda},
\eqref{eqEx1_62Lambda} are nonzero functions infinitely
differentiable on the segment $[-\omega_0,\omega_0]$ such that
they satisfy Eq.~\eqref{eqEx1_61Lambda} and nonlocal
conditions~\eqref{eqEx1_62Lambda}.

Straightforward calculation shows that the eigenvalues of
problem~\eqref{eqEx1_61Lambda}, \eqref{eqEx1_62Lambda} do not
depend on $t$ and have the form
\begin{equation}\label{eqEx1_Eig}
\lambda_k=\dfrac{\pi k}{\omega_0}i,\quad k=0,\pm1,\pm2,\dots\,.
\end{equation}
In the sequel, we consider the eigenvalues in the strip
$-1\le\Im\lambda\le0$. Since $0<\omega_0<\pi$, it follows that
this strip contains the unique eigenvalue $\lambda_0=0$. The
corresponding eigenvector has the form
\begin{equation}\label{eqEx1_EigVect}
\varphi_0(\omega)=-\frac{t}{\omega_0}\omega+1
\end{equation}
(up to a factor). It is easy to see that there is the unique (up
to the eigenvector) associate vector\footnote{If
$\lambda_0\in\mathbb C$ is an eigenvalue of
problem~\eqref{eqEx1_61Lambda}, \eqref{eqEx1_62Lambda} and
$\varphi_0(\omega)$ is the corresponding eigenvector, then the
associate vector $\varphi_1(\omega)$ is a solution (possibly,
zero) of the equation $\varphi_1''-\lambda_0^2\varphi_1+
\dfrac{d}{d\lambda}(\varphi_0''-\lambda^2\varphi_0)\Big|_{\lambda=\lambda_0}=0$
with nonlocal conditions~\eqref{eqEx1_62Lambda}. Therefore, if
$\lambda_0=0$, then the associate vector $\varphi_1(\omega)$ is a
solution of the equation $\varphi_1''=0$ with nonlocal
conditions~\eqref{eqEx1_62Lambda}.} $ \varphi_1(\omega)=0 $.

\begin{lemma}\label{lEx1_CodimUnstab}
There is a number $t_0>0$ such that $\codim\mathcal R(\mathbf
P_0)\le\codim\mathcal R(\mathbf P_t)=\const$ for $0<|t|\le t_0$.
\end{lemma}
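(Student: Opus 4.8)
The plan is to interpose, between $\mathbf P_0$ and $\mathbf P_t$, an auxiliary operator $\mathbf Q_t$ that coincides with $\mathbf P_t$ for every $t\ne0$ but is a \emph{proper} restriction of $\mathbf P_0$, and whose dependence on $t$ — unlike that of $\mathbf P_t$ — has no jump at $t=0$. Let $\mathbf Q_t$ be the restriction of $\mathbf P_t$ to the set of $u\in\Dom(\mathbf P_t)$ that, in addition, belong to the Kondrat'ev space $H_0^2$ in a neighbourhood of each of the points $g_1,g_2$. By the asymptotic theory of elliptic and nonlocal elliptic problems near conical and conjugation points (\cite{KondrTMMO67,SkMs86,GurMatZam05}), the quotient $\Dom(\mathbf P_t)/\Dom(\mathbf Q_t)$ is finite‑dimensional, of dimension at most the total algebraic multiplicity of the eigenvalues of problem \eqref{eqEx1_61Lambda}, \eqref{eqEx1_62Lambda} in the strip $-1\le\Im\lambda\le0$, i.e.\ at most two; hence $\mathbf Q_t$ is Fredholm, being a restriction of the Fredholm operator $\mathbf P_t$ to a subspace of finite codimension. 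Moreover each $u\in\Dom(\mathbf Q_t)$ belongs to $W_2^2$ away from $g_1,g_2$ (standard regularity for $\Delta$ off the conjugation points), so $\mathbf Q_t$ is equivalent — in the sense that it has the same kernel and the same cokernel dimension — to the bounded operator
$$
\mathcal L_t\colon u\longmapsto\bigl(\Delta u,\ u|_{\Gamma_1}-(1+t)\,u(\Omega_1(\cdot))|_{\Gamma_1},\ u|_{\Gamma_2}-(1-t)\,u(\Omega_2(\cdot))|_{\Gamma_2}\bigr)
$$
acting from the Hilbert space $\mathcal D$ of functions that lie in $H_0^2$ near $g_1,g_2$, in $W_2^2$ elsewhere, and have $\Delta u\in L_2(G)$, into $L_2(G)\times H_0^{3/2}(\Gamma_1)\times H_0^{3/2}(\Gamma_2)$. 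The space $\mathcal D$ does not depend on $t$, and $t\mapsto\mathcal L_t$ is affine, hence norm‑continuous.

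Next I would show that $\codim\mathcal R(\mathbf Q_t)=\codim\mathcal R(\mathcal L_t)$ is independent of $t$. Writing $\mathcal L_t=\mathcal L_{t'}+(\mathcal L_t-\mathcal L_{t'})$, the perturbation $\mathcal L_t-\mathcal L_{t'}$ is bounded on all of $\mathcal D$ with norm $O(|t-t'|)$, so by Theorem~\ref{thAppBTheorSec16Kr} one has $\codim\mathcal R(\mathcal L_t)\le\codim\mathcal R(\mathcal L_{t'})$ once $|t-t'|$ is small enough; interchanging $t$ and $t'$ gives the reverse inequality, and therefore $\codim\mathcal R(\mathcal L_t)$ is locally constant in $t$. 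Since the $\mathcal L_t$ are Fredholm and $t\mapsto\mathcal L_t$ is continuous, $\codim\mathcal R(\mathbf Q_t)$ is constant on any connected set of parameters — in particular on $\{|t|\le t_0\}$ for every $t_0>0$.

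It remains to compare $\mathbf Q_t$ with $\mathbf P_t$ at the two ends. For $t\ne0$ we have $\mathbf Q_t=\mathbf P_t$: in the strip $-1\le\Im\lambda\le0$ the model problem has only the eigenvalue $\lambda_0=0$, with canonical solutions $\varphi_0(\omega)$ and $(\ln r)\,\varphi_0(\omega)$ (up to factors; recall $\varphi_1=0$), so near $g_j$ each $u\in\Dom(\mathbf P_t)$ has the form $u=\chi_j\bigl(c_j^{(0)}\varphi_0(\omega)+c_j^{(1)}(\ln r)\,\varphi_0(\omega)\bigr)+w$ with $w\in H_0^2$ near $g_j$. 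As $(\ln r)\,\varphi_0(\omega)\notin W_2^1$ near $g_j$ for every $t$, necessarily $c_j^{(1)}=0$; and by \eqref{eqEx1_EigVect} the function $\varphi_0$ is nonconstant when $t\ne0$, so $\int_{G\cap\mathcal O_\varepsilon(g_j)}|\nabla(\varphi_0(\omega))|^2\,dy=\const\cdot\int_0^\varepsilon r^{-1}\,dr=\infty$, whence $\varphi_0(\omega)\notin W_2^1$ near $g_j$ and $c_j^{(0)}=0$ as well. Thus $u\in H_0^2$ near each $g_j$, i.e.\ $\Dom(\mathbf P_t)=\Dom(\mathbf Q_t)$, and so $\codim\mathcal R(\mathbf P_t)=\codim\mathcal R(\mathbf Q_t)=\const$ for $0<|t|\le t_0$. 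For $t=0$, on the contrary, $\varphi_0\equiv1$ is constant and $\chi_j\cdot1\in W_2^1(G)$; choosing the cut‑offs $\chi_j$ radial near $g_j$ and using that $\Omega_i$ is a rotation there one checks that $\chi_1,\chi_2\in\Dom(\mathbf P_0)\setminus\Dom(\mathbf Q_0)$, so $\mathbf Q_0$ is a \emph{proper} restriction of $\mathbf P_0$, $\mathcal R(\mathbf Q_0)\subseteq\mathcal R(\mathbf P_0)$, and $\codim\mathcal R(\mathbf P_0)\le\codim\mathcal R(\mathbf Q_0)$. Combining the three relations,
$$
\codim\mathcal R(\mathbf P_0)\le\codim\mathcal R(\mathbf Q_0)=\codim\mathcal R(\mathbf Q_t)=\codim\mathcal R(\mathbf P_t)=\const\qquad(0<|t|\le t_0),
$$
which is the assertion of the lemma.

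The main difficulty is the reduction in the first paragraph: establishing that $\mathbf Q_t$ really is equivalent (equal kernel, equal cokernel dimension) to a \emph{bounded} operator $\mathcal L_t$ between Hilbert spaces that do not depend on $t$. This requires the weighted‑space apparatus of \cite{SkMs86,GurMatZam05} — boundedness of the nonlocal operators on $H_0^2$, including where the shifted curves $\Omega_i(\Gamma_i)$ approach a conjugation point, together with solvability of the ``nonlocal trace'' part in the weighted scale — and, where $\Omega_i(\Gamma_i)$ stays away from $g_1,g_2$, the embedding $W_2^{3/2}(\Gamma_j)\subset H_{a+1}^{3/2}(\Gamma_j)$ of Lemma~\ref{lPsi=P1+Phi}. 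Granting that, the remainder is the bookkeeping of domains in the third paragraph and one application of Theorem~\ref{thAppBTheorSec16Kr}.
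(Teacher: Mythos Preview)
Your overall architecture is close to the paper's: the paper also passes to a bounded auxiliary operator on a weighted space (called $\mathbf N_t$ there, acting on $H_0^2(G)$ rather than your hybrid space $\mathcal D$), uses the asymptotic expansion at $\lambda_0=0$ to show that for $t\ne0$ every $u\in\Dom(\mathbf P_t)$ already lies in $H_0^2$ near the $g_j$, and then compares cokernels. Your $c_j^{(0)}=c_j^{(1)}=0$ argument and the inclusion $\mathcal R(\mathbf Q_0)\subset\mathcal R(\mathbf P_0)$ are fine and match the paper's reasoning.

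However, your second paragraph contains a genuine gap. You argue that $\codim\mathcal R(\mathcal L_t)$ is locally constant because Theorem~\ref{thAppBTheorSec16Kr} gives $\codim\mathcal R(\mathcal L_t)\le\codim\mathcal R(\mathcal L_{t'})$ for $|t-t'|$ small, and ``interchanging $t$ and $t'$ gives the reverse inequality''. This does not follow: the smallness threshold in Theorem~\ref{thAppBTheorSec16Kr} depends on the \emph{base} operator, so ``small relative to $\mathcal L_{t'}$'' and ``small relative to $\mathcal L_t$'' are different conditions, and nothing forces them to hold simultaneously. In general $\codim\mathcal R$ is only upper semicontinuous along a continuous Fredholm family (take the $1\times1$ family $t\mapsto[t]$); your argument, read literally, would prove local constancy for every such family.

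The paper closes exactly this gap, and it is the substantive ingredient you are missing. From Theorem~\ref{thAppBTheorSec16Kr} one gets only $\ind\mathbf N_t=\const$. The paper then proves \emph{separately} that $\dim\ker\mathbf N_0=0$ by a maximum-principle argument: any $u\in\ker\mathbf N_0$ lies in $C^\infty(G)\cap C(\overline G)$ with $u(g_1)=u(g_2)=0$; if $|u|$ attained its maximum on $\Gamma_i$, the relation $u|_{\Gamma_i}=u(\Omega_i(\cdot))|_{\Gamma_i}$ (the case $t=0$) would force an interior maximum, hence $u\equiv\const$, hence $u\equiv0$. Upper semicontinuity then gives $\dim\ker\mathbf N_t=0$ for $|t|\le t_0$, and only now does $\codim\mathcal R(\mathbf N_t)=-\ind\mathbf N_t=\const$ follow. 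You need the analogous kernel-triviality step for your $\mathcal L_t$ (or $\mathbf Q_t$); without it the constancy of the cokernel dimension is unproved and the chain of inequalities in your last display collapses.
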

\begin{proof}
1. Consider the operator
$$
\mathbf N_t:H_0^2(G)\to H_0^0(G)\times H_0^{3/2}(\Gamma_1)\times
H_0^{3/2}(\Gamma_2)
$$
given by
$$
\mathbf N_t=(\Delta u,\ u|_{\Gamma_1}-(1+t)
u(\Omega_1(y))|_{\Gamma_1},\ u|_{\Gamma_2}-(1-t)
u(\Omega_2(y))|_{\Gamma_2}).
$$
Since the line $\Im\lambda=-1$ contains no eigenvalues of
problem~\eqref{eqEx1_61Lambda}, \eqref{eqEx1_62Lambda}, it follows
from Theorem~3.4 in~\cite{SkMs86} that the operator $\mathbf N_t$
has the Fredholm property for all $t$. Further, the operator
$u\mapsto u(\Omega_j(y))|_{\Gamma_j}$ is a bounded operator acting
from $H_0^2(G)$ to $H_0^{3/2}(\Gamma_j)$. On the other hand, small
perturbations do not change the index of a Fredholm operator (see
Theorem~\ref{thAppBTheorSec16Kr}); therefore, $\ind\mathbf
N_t=\const$ for all $t$ from a sufficiently small neighborhood of
an arbitrary point $t'\in\mathbb C$, which yields
\begin{equation}\label{eqEx1_CodimUnstab_1}
\ind\mathbf N_t=\const,\quad t\in\mathbb C.
\end{equation}

2. Let us prove that
\begin{equation}\label{eqEx1_CodimUnstab_2}
\codim\mathcal R(\mathbf N_t)=\const,\quad |t|\le t_0,
\end{equation}
where $t_0>0$ is sufficiently small. Due
to~\eqref{eqEx1_CodimUnstab_1}, it suffices to show that
\begin{equation}\label{eqEx1_CodimUnstab_3}
\dim\ker\mathbf N_t=0,\quad |t|\le t_0.
\end{equation}

Let $t=0$, and let $u\in\ker \mathbf N_0$. Lemma~2.1
in~\cite{GurMatZam05} implies that the function $u$ is infinitely
differentiable outside an arbitrarily small neighborhood of the
set $\{g_1,g_2\}$. On the other hand, $u\in H_0^2(G)\subset
W_2^2(G)$; therefore, by the Sobolev embedding theorem, $u\in
C^\infty(G)\cap C(\overline{G})$ and
\begin{equation}\label{eqEx1_CodimUnstab_2'}
u(g_1)=u(g_2)=0.
\end{equation}

Since the coefficients of the problem are real for $t=0$, we can
assume without loss of generality that the function $u(y)$ is
real-valued. If the function $|u(y)|$ achieves its maximum inside
the domain $G$, then the maximum principle implies that $u=\const$
in $\overline G$; hence, $u=0$ by~\eqref{eqEx1_CodimUnstab_2'}. If
$|u(y)|$ achieves its maximum on the part $\Gamma_i$ of the
boundary, then nonlocal conditions~\eqref{eqEx1_62}, which take
the form
$$
u|_{\Gamma_1}=u(\Omega_1(y))|_{\Gamma_1},\qquad
u|_{\Gamma_2}=u(\Omega_2(y))|_{\Gamma_2}
$$
for $t=0$, imply that $|u(y)|$ also achieves its maximum inside
the domain $G$; hence, $u=0$ by what has been proved. Finally, if
$|u(y)|$ achieves its maximum at the point $g_1$ or $g_2$, then
$u=0$ by~\eqref{eqEx1_CodimUnstab_2'}.

Thus, we have proved that $\dim\ker\mathbf N_0=0$. It follows from
Theorem~\ref{thAppBTheorSec16Kr} that $\dim\ker\mathbf
N_t\le\dim\ker\mathbf N_0=0$ for sufficiently small $|t|$; this
yields~\eqref{eqEx1_CodimUnstab_3} and,
hence,~\eqref{eqEx1_CodimUnstab_2}.

3. Now we prove that
\begin{equation}\label{eqEx1_CodimUnstab_4}
\mathcal R(\mathbf P_t)=\{f\in L_2(G):\ (f,0,0)\in\mathcal
R(\mathbf N_t)\},\qquad 0\ne t\in\mathbb C.
\end{equation}

Since any solution $u\in H_0^2(G)$ of problem~\eqref{eqEx1_61},
\eqref{eqEx1_62} with right-hand side $f\in L_2(G)$ belongs to
$W_2^1(G)$, it follows that
\begin{equation}\label{eqEx1_CodimUnstab_5}
\mathcal R(\mathbf P_t)\supset\{f\in L_2(G):\ (f,0,0)\in\mathcal
R(\mathbf N_t)\},\qquad t\in\mathbb C.
\end{equation}

To prove the inverse embedding for $t\ne0$, we consider an
arbitrary function $f\in\mathcal R(\mathbf P_t)$. Let $u\in
W_2^1(G)$ be a solution of problem~\eqref{eqEx1_61},
\eqref{eqEx1_62} with the right-hand side $f$. It follows
from~\cite{GurTrMIRAN} that $u\in H_{a+1}^2(G)$ for all $a>0$. Due
to~\eqref{eqEx1_Eig}, there is a number $a>0$ such that the strip
$-1\le\Im\lambda<a$ contains the unique eigenvalue $\lambda_0=0$
of problem~\eqref{eqEx1_61}, \eqref{eqEx1_62}. Theorem~3.3
in~\cite{SkMs86} about the asymptotic behavior of solutions of
nonlocal problems implies that
\begin{equation}\label{eqEx1_CodimUnstab_5'}
u(y)=c_j\varphi_0(\omega)+d_j\varphi_0(\omega)\ln r +v_j(y),\quad
y\in G\cap\mathcal O_{\varepsilon}(g_j),
\end{equation}
where $(\omega,r)$ are the polar coordinates with the pole at the
point $g_j$, $\varphi_0(\omega)$ is given
by~\eqref{eqEx1_EigVect}, and $v_j\in H_0^2(G\cap\mathcal
O_{\varepsilon}(g_j))$. Note that
$$
u\in  W_2^1(G\cap\mathcal O_{\varepsilon}(g_j)),\qquad v_j\in
W_2^1(G\cap\mathcal O_{\varepsilon}(g_j)),
$$
but
$$
\varphi_0(\omega)\notin W_2^1(G\cap\mathcal
O_{\varepsilon}(g_j))\qquad \varphi_0(\omega)\ln r\notin
W_2^1(G\cap\mathcal O_{\varepsilon}(g_j))
$$
for $t\ne0$. Therefore, $c_j=d_j=0$
in~\eqref{eqEx1_CodimUnstab_5'}, which yields $u\in H_0^2(G)$.
Thus, we have proved that $(f,0,0)\in\mathcal R(\mathbf N_t)$,
i.e.,
\begin{equation}\label{eqEx1_CodimUnstab_5''}
\mathcal R(\mathbf P_t)\subset\{f\in L_2(G):\ (f,0,0)\in\mathcal
R(\mathbf N_t)\},\qquad 0\ne t\in\mathbb C.
\end{equation}

Relations~\eqref{eqEx1_CodimUnstab_5}
and~\eqref{eqEx1_CodimUnstab_5''}
imply~\eqref{eqEx1_CodimUnstab_4}.

4. Let us prove that\footnote{In~\eqref{eqEx1_CodimUnstab_6}, the
codimension of the subspace $\{f\in L_2(G):\ (f,0,0)\in\mathcal
R(\mathbf N_t)\}$ is calculated in the space $H_0^0(G)$, whereas
the codimension of the subspace $\mathcal R(\mathbf N_t)$ is
calculated in the space $H_0^0(G)\times H_0^{3/2}(\Gamma_1)\times
H_0^{3/2}(\Gamma_2)$.}
\begin{equation}\label{eqEx1_CodimUnstab_6}
\codim\{f\in L_2(G):\ (f,0,0)\in\mathcal R(\mathbf
N_t)\}=\codim\mathcal R(\mathbf N_t),\qquad t\in\mathbb C.
\end{equation}
Fix a number $t\in\mathbb C$ and set
$$
J_1=\codim\{f\in L_2(G):\ (f,0,0)\in\mathcal R(\mathbf
N_t)\},\qquad J_2=\codim\mathcal R(\mathbf N_t).
$$
Denote $\mathcal H_0^0(G,\Gamma)=H_0^0(G)\times
H_0^{3/2}(\Gamma_1)\times H_0^{3/2}(\Gamma_2)$.

Let $f\in L_2(G)$ and $(f,0,0)\in\mathcal R(\mathbf N_t)$. This is
equivalent to the relations
$$
\big((f,0,0),F_j\big)_{\mathcal H_0^0(G,\Gamma)}=0,\quad
j=1,\dots,J_2,
$$
where $F_j$ are functions that form the basis for the orthogonal
complement to the subspace $\mathcal R(\mathbf N_t)$ in the space
$\mathcal H_0^0(G,\Gamma)$. By the Riesz theorem on the general
form of a linear bounded functional in a Hilbert space, these
relations are equivalent to the following ones:
$$
(f,\hat f_j)_{L_2(G)}=0,\quad j=1,\dots,J_2,
$$
where $\hat f_j$ are some functions from the space $L_2(G)$. Thus,
\begin{equation}\label{eqLess}
J_1\le J_2
\end{equation}
(the equality takes place if and only if the functions $\hat f_j$
are linearly independent).

Let us prove the inverse inequality. Let $F=(f,f_1,f_2)$ be an
arbitrary function from the space $\mathcal R(\mathbf N_t)$. Then
there exists a function $u\in H_0^2(G)$ such that
\begin{equation*}
 \Delta u=f(y),\quad y\in G,
\end{equation*}
\begin{equation*}
u|_{\Gamma_1}-(1+t) u(\Omega_1(y))|_{\Gamma_1}=f_1,\qquad
u|_{\Gamma_2}-(1-t) u(\Omega_2(y))|_{\Gamma_2}=f_2.
\end{equation*}
Using Lemma~3.1 in~\cite{MP}, we can construct a function $v\in
H_0^2(G)$ such that
\begin{equation*}
v|_{\Gamma_1}-(1+t) v(\Omega_1(y))|_{\Gamma_1}=f_1,\qquad
v|_{\Gamma_2}-(1-t) v(\Omega_2(y))|_{\Gamma_2}=f_2,
\end{equation*}
\begin{equation}\label{eqNormvf}
\|v\|_{H_0^2(G)}\le
k_1\big(\|f_1\|_{H_0^{3/2}(\Gamma_1)}+\|f_2\|_{H_0^{3/2}(\Gamma_2)}\big),
\end{equation}
where $k_1>0$ does not depend on $f_1$ and $f_2$.

Clearly, the function $w=u-v\in H_0^2(G)$ is a solution of the
problem
\begin{equation*}
 \Delta w=f(y)-\Delta v,\quad y\in G,
\end{equation*}
\begin{equation*}
w|_{\Gamma_1}-(1+t) w(\Omega_1(y))|_{\Gamma_1}=0,\qquad
w|_{\Gamma_2}-(1-t) w(\Omega_2(y))|_{\Gamma_2}=0.
\end{equation*}
Therefore,
$$
f-\Delta v\in L_2(G),\qquad (f-\Delta v,0,0)\in\mathcal R(\mathbf
N_t),
$$
which is equivalent to the relations
$$
(f-\Delta v, f_j')_{L_2(G)}=0,\quad j=1,\dots,J_1,
$$
where $f_j'\in L_2(G)$ are functions that form the basis for the
orthogonal supplement to the subspace $\{f\in L_2(G):\
(f,0,0)\in\mathcal R(\mathbf N_t)\}$ in the space $L_2(G)$. By the
Riesz theorem on the general form of a linear bounded functional
in a Hilbert space and by estimate~\eqref{eqNormvf}, these
relations are equivalent to the following ones:
$$
(F,F_j')_{\mathcal H_0^0(G,\Gamma)}=0,\quad j=1,\dots,J_1,
$$
where $F_j'$ are some functions from the space $\mathcal
H_0^0(G,\Gamma)$. Thus,
\begin{equation}\label{eqGreater}
J_2\le J_1
\end{equation}
(the equality takes place if and only if the functions $F_j'$ are
linearly independent).

Inequalities~\eqref{eqLess} and \eqref{eqGreater}
imply~\eqref{eqEx1_CodimUnstab_6}.

It follows from relations~\eqref{eqEx1_CodimUnstab_6} and
\eqref{eqEx1_CodimUnstab_2} that
\begin{equation}\label{eqEx1_CodimUnstab_6*}
\codim\{f\in L_2(G):\ (f,0,0)\in\mathcal R(\mathbf
N_t)\}=\const,\qquad |t|\le t_0.
\end{equation}

Combining~\eqref{eqEx1_CodimUnstab_4}, \eqref{eqEx1_CodimUnstab_5}
for $t=0$, and~\eqref{eqEx1_CodimUnstab_6*}, we complete the
proof.
\end{proof}

\begin{lemma}\label{lEx1_kerUnstab}
Let the number $t_0>0$ be the same as in
Lemma~{\rm\ref{lEx1_CodimUnstab}}. Then $\dim\ker\mathbf
P_0>\dim\ker\mathbf P_t=0$ for $0<|t|\le t_0$.
\end{lemma}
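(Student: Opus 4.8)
The statement has two parts: first, that $\dim\ker\mathbf P_t=0$ for $0<|t|\le t_0$; second, that $\dim\ker\mathbf P_0>0$. The plan is to prove the first part by the same weighted-space reduction used in the proof of Lemma~\ref{lEx1_CodimUnstab}, and the second part by exhibiting an explicit nonzero element of $\ker\mathbf P_0$ built from the eigenvector $\varphi_0(\omega)=1$ of the model problem at $t=0$.

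For the first part, let $0<|t|\le t_0$ and take $u\in\ker\mathbf P_t$, so $u\in W_2^1(G)$, $\Delta u=0$, and $u$ satisfies~\eqref{eqEx1_62}. By the same argument as in Step~3 of the proof of Lemma~\ref{lEx1_CodimUnstab} (applying the regularity result from~\cite{GurTrMIRAN}, namely $u\in H_{a+1}^2(G)$ for all $a>0$, together with the asymptotic expansion~\eqref{eqEx1_CodimUnstab_5'} and the fact that $\varphi_0(\omega)$ and $\varphi_0(\omega)\ln r$ do not belong to $W_2^1(G\cap\mathcal O_\varepsilon(g_j))$ when $t\ne0$), one concludes that the coefficients $c_j,d_j$ vanish and hence $u\in H_0^2(G)$. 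Thus $u\in\ker\mathbf N_t$ (with $f_1=f_2=0$), and by~\eqref{eqEx1_CodimUnstab_3} we have $\dim\ker\mathbf N_t=0$, so $u=0$. This gives $\dim\ker\mathbf P_t=0$ for $0<|t|\le t_0$.

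For the second part I must produce a nonzero $u\in\ker\mathbf P_0$, i.e. a function $u\in W_2^1(G)$ with $\Delta u=0$ in $G$ and $u|_{\Gamma_1}=u(\Omega_1(y))|_{\Gamma_1}$, $u|_{\Gamma_2}=u(\Omega_2(y))|_{\Gamma_2}$. Near each $g_j$ the model problem at $t=0$ has eigenvalue $\lambda_0=0$ with eigenvector $\varphi_0\equiv1$, which corresponds to the harmonic function that is constant $\equiv1$ in the polar sector — and the rotation $\Omega_j$ acts trivially on a constant, so the nonlocal conditions~\eqref{eqEx1_62} with $t=0$ are automatically satisfied by the constant $1$ locally. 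The idea is to take $u\equiv1$: it is harmonic, lies in $W_2^1(G)$, and since $\Omega_i(\Gamma_i)\subset G$ one has $u(\Omega_i(y))=1=u(y)$ on $\Gamma_i$, so both nonlocal conditions hold. Hence $u\equiv1\in\ker\mathbf P_0$, giving $\dim\ker\mathbf P_0\ge1>0=\dim\ker\mathbf P_t$.

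The only subtlety — and the step I expect to require the most care — is whether $u\equiv1$ might actually be forced to lie in $\Dom(\mathbf P_t)$ for small $t\ne0$ as well (which would contradict $\dim\ker\mathbf P_t=0$); but this is exactly ruled out by the nonlocal conditions: for $t\ne0$ the condition on $\Gamma_1$ becomes $1-(1+t)\cdot1=-t\ne0$, so the constant is \emph{not} a solution, consistent with $\ker\mathbf P_t=0$. One should also confirm that no cancellation among the candidate elements is needed: a single explicit nonzero element suffices for the strict inequality $\dim\ker\mathbf P_0>\dim\ker\mathbf P_t$, so I do not need to compute $\dim\ker\mathbf P_0$ exactly. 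Combining the two parts completes the proof.
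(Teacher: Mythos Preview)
Your proof is correct and follows essentially the same route as the paper: for $0<|t|\le t_0$ you reduce $\ker\mathbf P_t$ to $\ker\mathbf N_t=\{0\}$ via the asymptotic expansion argument of Step~3 in Lemma~\ref{lEx1_CodimUnstab}, and for $t=0$ you exhibit the constant function $u\equiv1$ as a nonzero element of $\ker\mathbf P_0$, exactly as the paper does. The additional remarks about why the constant fails the nonlocal conditions for $t\ne0$ are correct but superfluous, since Part~1 already establishes $\ker\mathbf P_t=0$.
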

\begin{proof}
1. Let $0<|t|\le t_0$, and let $u\in\ker\mathbf P_t$. Similarly to
item~3 of the proof of Lemma~\ref{lEx1_CodimUnstab}, one can show
that $u\in H_0^2(G)$; therefore, $u\in\ker \mathbf N_t$. It
follows from~\eqref{eqEx1_CodimUnstab_3} that $u=0$; thus,
$\dim\ker\mathbf P_t=0$.

2. Let $t=0$. In this case, $u=\const$ belongs to $\ker\mathbf
P_0$.
\end{proof}

\begin{proof}[Proof of Theorem~{\rm \ref{thEx1_IndUnstab}}]
Applying Lemmas~\ref{lEx1_kerUnstab} and \ref{lEx1_CodimUnstab},
we obtain
\begin{multline*}
 \ind\mathbf P_0=\dim\ker\mathbf
P_0-\codim\mathcal R(\mathbf P_0)>\\
>-\codim\mathcal R(\mathbf
P_0)\ge -\codim\mathcal R(\mathbf P_t) =\ind\mathbf P_t,\quad
0<|t|\le t_0.
\end{multline*}
\end{proof}

{\bf 3.} Now we show that the index of the unbounded operator may
change even if nonlocal terms are supported in an arbitrarily
small neighborhood of the conjugation points $g_1$ and $g_2$.

Let $G$, $\Gamma_i$, and $g_j$ be the same as above. Consider the
following nonlocal problem in the domain $G$:
\begin{equation}\label{eqEx2_61}
 \Delta u=f(y),\quad y\in G,
\end{equation}
\begin{equation}\label{eqEx2_62}
u|_{\Gamma_1}-(1+t) \xi(y) u(\Omega_1(y))|_{\Gamma_1}=0,\qquad
u|_{\Gamma_2}-(1-t) \xi(y) u(\Omega_2(y))|_{\Gamma_2}=0,
\end{equation}
where $\xi\in C^\infty(\mathbb R^2)$, the function $\xi$ is
supported in an arbitrarily small neighborhood of the points $g_1$
and $g_2$, and $\xi(y)=1$ near these points (see
Fig.~\ref{figEx2}).
\begin{figure}[ht]
{ \hfill\epsfxsize10cm\epsfbox{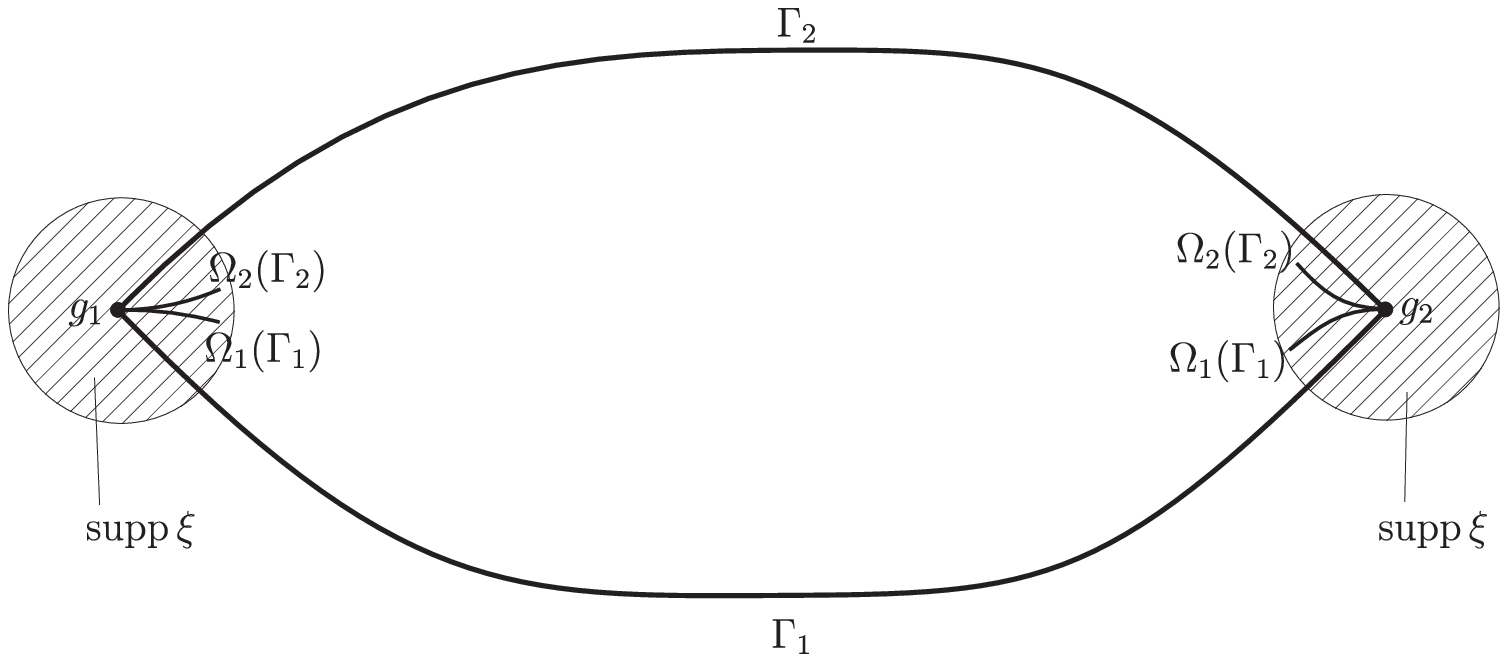}\hfill\ }
\caption{Problem~\eqref{eqEx2_61}, \eqref{eqEx2_62}}
\label{figEx2}
\end{figure}

Consider the unbounded operator $\mathbf P_t': \Dom(\mathbf
P_t')\subset L_2(G)\to L_2(G)$ given by
$$
 \mathbf P_t' u=\Delta u,\qquad u\in \Dom(\mathbf P_t),
$$
where
$$
\Dom(\mathbf P_t)=\{u\in
 W_2^1(G):\ \Delta u\in L_2(G)\ \text{and}\ u\ \text{satisfies~\eqref{eqEx2_62}}
 \}.
$$
By Theorem~2.1 in~\cite{GurMatZam05}, the operator $\mathbf P_t$
has the Fredholm property for all $t\in\mathbb C$.

The main result of this section is as follows.
\begin{theorem}\label{thEx2_IndUnstab}
There is a number $t_0>0$ such that $\ind \mathbf P_0'>\ind\mathbf
P_t'=\const$ for $0<|t|\le t_0$.
\end{theorem}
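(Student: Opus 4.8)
The plan is to follow the scheme of the proof of Theorem~\ref{thEx1_IndUnstab}, using that, since $\xi\equiv1$ in a neighborhood of $g_1$ and $g_2$ (we may take the cutoff so that $|\xi|\le1$), problem~\eqref{eqEx2_61},~\eqref{eqEx2_62} has, near each point $g_j$, exactly the same model problem~\eqref{eqEx1_61Lambda},~\eqref{eqEx1_62Lambda} as problem~\eqref{eqEx1_61},~\eqref{eqEx1_62}; in particular, its eigenvalues are given by~\eqref{eqEx1_Eig}, its only eigenvector in the strip $-1\le\Im\lambda\le0$ is $\varphi_0$ from~\eqref{eqEx1_EigVect}, and the line $\Im\lambda=-1$ contains no eigenvalues. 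First I would introduce the operator $\mathbf N_t'\colon H_0^2(G)\to H_0^0(G)\times H_0^{3/2}(\Gamma_1)\times H_0^{3/2}(\Gamma_2)$ carrying the equation and the nonlocal conditions of~\eqref{eqEx2_62}; by Theorem~3.4 in~\cite{SkMs86} it has the Fredholm property, and since $u\mapsto\xi(y)u(\Omega_j(y))|_{\Gamma_j}$ is bounded from $H_0^2(G)$ into $H_0^{3/2}(\Gamma_j)$ and $\mathbf N_t'$ depends affinely on $t$, Theorem~\ref{thAppBTheorSec16Kr} gives $\ind\mathbf N_t'=\const$ for $t\in\mathbb C$. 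Items~2--4 of the proof of Lemma~\ref{lEx1_CodimUnstab} then carry over with the single change that the maximum-principle argument for $\dim\ker\mathbf N_0'=0$ now uses $|\xi|\le1$: if $|u|$ attains its maximum over $\overline G$ on $\Gamma_i$ at a point $y_*$, then $|u(y_*)|=|\xi(y_*)|\,|u(\Omega_i(y_*))|\le|u(\Omega_i(y_*))|$, so the maximum is attained at the interior point $\Omega_i(y_*)$. This yields $\dim\ker\mathbf N_t'=0$ and $\codim\mathcal R(\mathbf N_t')=\nu:=-\ind\mathbf N_0'$ for $|t|\le t_0$, together with $\codim\{f\in L_2(G):(f,0,0)\in\mathcal R(\mathbf N_t')\}=\nu$.

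As in item~3 of the proof of Lemma~\ref{lEx1_CodimUnstab} and in Lemma~\ref{lEx1_kerUnstab}, I would then use~\cite{GurTrMIRAN} and Theorem~3.3 in~\cite{SkMs86} to represent any generalized solution $u\in W_2^1(G)$ of~\eqref{eqEx2_61},~\eqref{eqEx2_62} with $f\in L_2(G)$ in the form $u(y)=c_j\varphi_0(\omega)+d_j\varphi_0(\omega)\ln r+v_j(y)$ near $g_j$, with $v_j\in H_0^2(G\cap\mathcal O_\varepsilon(g_j))$. Since $\varphi_0(\omega)\ln r\notin W_2^1(G\cap\mathcal O_\varepsilon(g_j))$ for every $t$, we get $d_j=0$; for $t\ne0$ we also have $\varphi_0(\omega)\notin W_2^1(G\cap\mathcal O_\varepsilon(g_j))$, hence $c_j=0$ and $u\in H_0^2(G)$. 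Consequently, for $0<|t|\le t_0$ the operator $\mathbf P_t'$ coincides with its restriction $\mathbf L_t'$ to $\Dom(\mathbf P_t')\cap H_0^2(G)$; since $\ker\mathbf L_t'\subset\ker\mathbf N_t'=0$ and $\mathcal R(\mathbf L_t')=\{f\in L_2(G):(f,0,0)\in\mathcal R(\mathbf N_t')\}$, we obtain $\ind\mathbf P_t'=-\nu=\const$.

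The new feature is the case $t=0$: here $\varphi_0\equiv1\in W_2^1$ while $\varphi_0\ln r=\ln r\notin W_2^1$, so a generalized solution of~\eqref{eqEx2_61},~\eqref{eqEx2_62} with $t=0$ has the form $u=c_j\cdot1+v_j$ near $g_j$, $v_j\in H_0^2(G\cap\mathcal O_\varepsilon(g_j))$, and $u\in H_0^2(G)$ if and only if $c_1=c_2=0$. Thus $\mathbf P_0'$ is a finite-dimensional extension of its restriction $\mathbf L_0':=\mathbf P_0'|_{\Dom(\mathbf P_0')\cap H_0^2(G)}$: the linear map $u\mapsto\big(c_1(u),c_2(u)\big)$ on $\Dom(\mathbf P_0')$ has kernel exactly $\Dom(\mathbf L_0')$, so $d:=\dim\big(\Dom(\mathbf P_0')/\Dom(\mathbf L_0')\big)\le2$. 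Since $\ker\mathbf L_0'\subset\ker\mathbf N_0'=0$ and $\mathcal R(\mathbf L_0')=\{f\in L_2(G):(f,0,0)\in\mathcal R(\mathbf N_0')\}$ has codimension $\nu$ in $L_2(G)$, the elementary index bookkeeping for a finite-dimensional extension gives $\ind\mathbf P_0'=\big(\dim\ker\mathbf L_0'-\codim\mathcal R(\mathbf L_0')\big)+d=-\nu+d$ (equivalently, either $\dim\ker\mathbf P_0'>0$ or $\codim\mathcal R(\mathbf P_0')<\nu$, and in both cases $\ind\mathbf P_0'\ge d-\nu$).

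It remains to prove $d\ge1$, and this is the step I expect to be the main obstacle, since it is precisely where the passage from $H_0^2(G)$- to $W_2^1(G)$-solutions must be shown to genuinely enlarge the domain at $t=0$. I would argue as follows. Choose $\zeta\in C_0^\infty(\mathcal O_\varepsilon(g_1))$ equal to $1$ in a neighborhood of $g_1$ large enough that $\zeta(\Omega_j(y))=1$ for $y\in\Gamma_j$ near $g_1$. Then $\phi_j:=\zeta|_{\Gamma_j}-\xi(y)\zeta(\Omega_j(y))|_{\Gamma_j}$ is smooth on $\Gamma_j$ and vanishes near $g_1$ and near $g_2$, so $\phi_j\in H_0^{3/2}(\Gamma_j)$. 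By the construction of item~4 of the proof of Lemma~\ref{lEx1_CodimUnstab} (which relies on Lemma~3.1 in~\cite{MP}), now applied to the nonlocal conditions~\eqref{eqEx2_62} with $t=0$, there is $w\in H_0^2(G)$ with $w|_{\Gamma_j}-\xi(y)w(\Omega_j(y))|_{\Gamma_j}=\phi_j$. Then $u:=\zeta-w\in W_2^1(G)$, $\Delta u=\Delta\zeta-\Delta w\in L_2(G)$, $u$ satisfies~\eqref{eqEx2_62} with $t=0$, and near $g_1$ we have $u=1-w$ with $w\in H_0^2$, so $c_1(u)=1\ne0$ and $u\notin H_0^2(G)$. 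Hence $d\ge1$ (in fact $d=2$, by the analogous construction at $g_2$), and therefore $\ind\mathbf P_0'=-\nu+d\ge1-\nu>-\nu=\ind\mathbf P_t'$ for $0<|t|\le t_0$, with $\ind\mathbf P_t'=\const$, which proves the theorem.
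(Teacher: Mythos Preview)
Your argument is correct, but it takes a substantially longer route than the paper does. The paper's proof of Theorem~\ref{thEx2_IndUnstab} is essentially one line: the nonlocal conditions~\eqref{eqEx2_62} differ from~\eqref{eqEx1_62} by the operators $u\mapsto(1\pm t)(1-\xi(y))u(\Omega_i(y))|_{\Gamma_i}$, whose coefficients $(1\pm t)(1-\xi(y))$ vanish near $g_1$ and $g_2$; hence, by the index-stability result of~\cite{GurTrMIRAN}, $\ind\mathbf P_t'=\ind\mathbf P_t$ for every $t\in\mathbb C$, and the conclusion follows immediately from Theorem~\ref{thEx1_IndUnstab}. No separate analysis of $\mathbf N_t'$, of the asymptotics, or of the case $t=0$ is needed.

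Your approach instead re-runs the entire machinery of Lemmas~\ref{lEx1_CodimUnstab} and~\ref{lEx1_kerUnstab} directly for the primed operators. The genuinely new work this forces on you is at $t=0$: for the unprimed problem one simply observes that the constant function lies in $\ker\mathbf P_0$, but constants do \emph{not} satisfy~\eqref{eqEx2_62} (since $1-\xi(y)\not\equiv0$ on $\Gamma_i$), so you are obliged to manufacture an element of $\Dom(\mathbf P_0')\setminus H_0^2(G)$ by hand and appeal to the finite-extension index formula $\ind\mathbf P_0'=\ind\mathbf L_0'+d$. Your construction of $u=\zeta-w$ and the bookkeeping are fine. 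What you gain is a proof that does not invoke the index-stability theorem of~\cite{GurTrMIRAN} (you use that reference only for the regularity $u\in H_{a+1}^2(G)$); what you lose is the brevity and the conceptual point that the two problems have equal index \emph{for every} $t$, not just the same jump behavior near $t=0$.
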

\begin{proof}
Nonlocal conditions~\eqref{eqEx1_62} differ from nonlocal
conditions~\eqref{eqEx2_62} by the operators
$$
u\mapsto(1+t) (1-\xi(y)) u(\Omega_1(y))|_{\Gamma_1},\qquad
u\mapsto(1-t)(1-\xi(y)) u(\Omega_2(y))|_{\Gamma_2}.
$$
Since the coefficients $(1\pm t) (1-\xi(y))$ at the nonlocal terms
vanish near the points $g_1$ and $g_2$, it follows that $
\ind\mathbf P_t'=\ind\mathbf P_t$ for all $t\in\mathbb C$ due
to~\cite{GurTrMIRAN}. Hence, the assertion of the theorem follows
from Theorem~\ref{thEx1_IndUnstab}.
\end{proof}

\section{The index instability in the case
where the nonlocal terms are supported outside the conjugation
points}

In this section, we show that the index of the unbounded operator
may also change in the case where the support of nonlocal terms
does not contain the conjugation points (and even lies strictly
inside the domain).

Let $G$, $\Gamma_i$, and $g_j$ be the same as above. We
additionally assume that
$$
 0<\omega_0<\pi/2
$$
and consider the following nonlocal problem in the domain $G$:
\begin{equation}\label{eqEx3_61}
 \Delta u=f(y),\quad y\in G,
\end{equation}
\begin{equation}\label{eqEx3_62}
u|_{\Gamma_1}+t u(\Omega(y))|_{\Gamma_1}=0,\qquad u|_{\Gamma_2}=0,
\end{equation}
where $t\in\mathbb R$ and $\Omega$ is a $C^\infty$-diffeomorphism
defined on some neighborhood of the curve $\Gamma_1$. Moreover,
let $ \overline{\Omega(\Gamma_1)}\subset G $ (see
Fig.~\ref{figEx3}).
\begin{figure}[ht]
{ \hfill\epsfxsize10cm\epsfbox{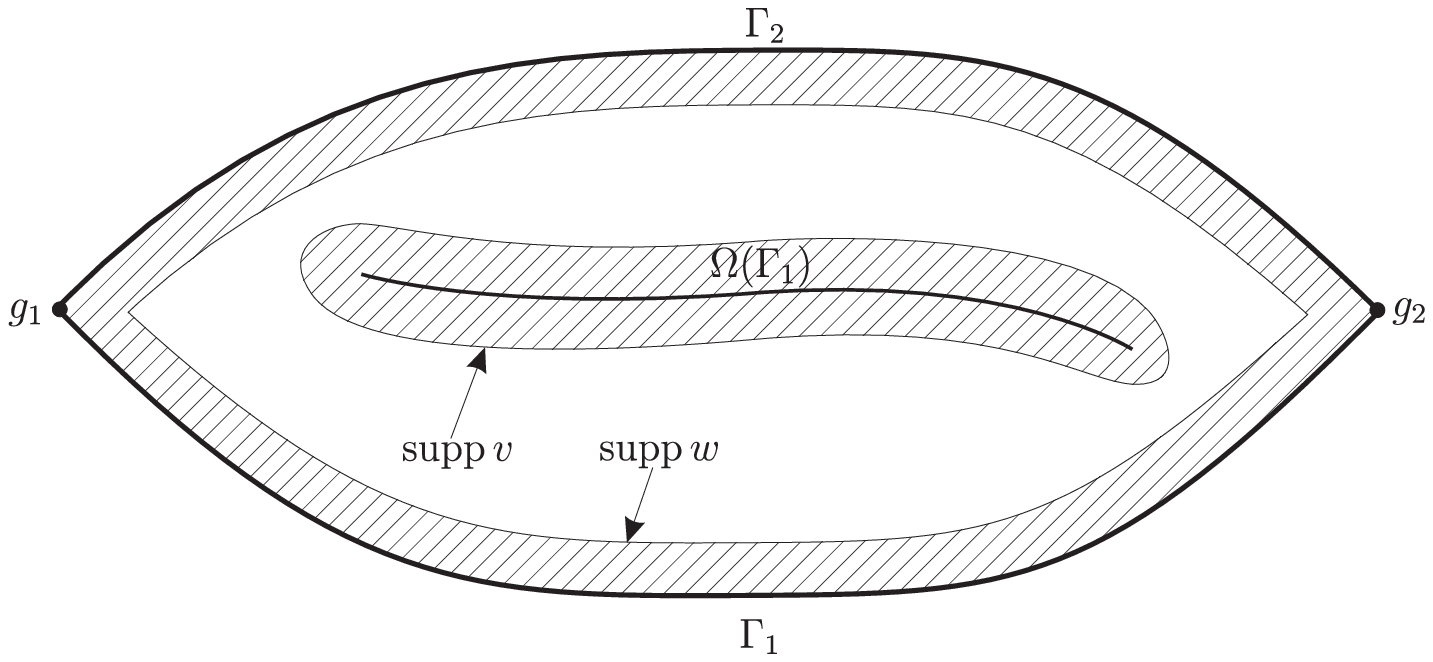}\hfill\ }
\caption{Problem~\eqref{eqEx3_61}, \eqref{eqEx3_62}}
\label{figEx3}
\end{figure}

Consider the unbounded operator $\mathbf P_t: \Dom(\mathbf
P_t)\subset L_2(G)\to L_2(G)$ given by
$$
 \mathbf P_t u=\Delta u,\qquad u\in \Dom(\mathbf P_t),
$$
where
$$
\Dom(\mathbf P_t)=\{u\in
 W_2^1(G):\ \Delta u\in L_2(G)\ \text{and}\ u\ \text{satisfies~\eqref{eqEx3_62}}
 \}.
$$
By Theorem~2.1 in~\cite{GurMatZam05}, the operator $\mathbf P_t$
has the Fredholm property for all $t\in\mathbb C$.

The main result of this section is as follows.
\begin{theorem}\label{thEx3_IndUnstab}
There is a number $t_0>0$ such that $0=\ind \mathbf
P_0>\ind\mathbf P_t$ for $0<|t|\le t_0$.
\end{theorem}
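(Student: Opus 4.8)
The plan is to show that $\ind\mathbf P_0=0$ (this is easy: $\mathbf P_0$ is the operator of the Dirichlet problem for $\Delta$ on generalized solutions, $\Dom(\mathbf P_0)=\{u\in W_2^1(G):\Delta u\in L_2(G),\ u|_{\partial G}=0\}$, and it is a bijection onto $L_2(G)$ by uniqueness and solvability of the Dirichlet problem), and then to produce $t_0>0$ such that for $0<|t|\le t_0$ one has $\dim\ker\mathbf P_t=0$ but $\codim\mathcal R(\mathbf P_t)\ge1$; in fact $\codim\mathcal R(\mathbf P_t)=2$, so $\ind\mathbf P_t=-2<0$. The mechanism differs from that of Section~3: here the model problem at $g_j$ is $t$-independent (it is the \emph{homogeneous} Dirichlet problem $\varphi''-\lambda^2\varphi=0$, $\varphi(\pm\omega_0)=0$, because $\overline{\Omega(\Gamma_1)}\subset G$ keeps the support of the nonlocal term away from $g_1,g_2$), yet the nonlocal term forces the boundary data on $\Gamma_1$ to be nonzero at $g_1$ and $g_2$ in general, which is incompatible with the solution being in $W_2^1(G)$; this drop is entirely in the cokernel.

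First I would fix $b=a+1$ with $0<a<\pi/(2\omega_0)$ and introduce
\[
\mathbf N_t:H_b^2(G)\to H_b^0(G)\times H_b^{3/2}(\Gamma_1)\times H_b^{3/2}(\Gamma_2),\qquad
\mathbf N_t u=\bigl(\Delta u,\ u|_{\Gamma_1}+t\,u(\Omega(y))|_{\Gamma_1},\ u|_{\Gamma_2}\bigr).
\]
Since $\overline{\Omega(\Gamma_1)}\subset G$, the map $u\mapsto u(\Omega(y))|_{\Gamma_1}$ sends $H_b^2(G)$ into $W_2^{3/2}(\Gamma_1)$, and by Lemma~\ref{lPsi=P1+Phi} (using $a>0$) it is bounded from $H_b^2(G)$ into $H_b^{3/2}(\Gamma_1)$, so $\mathbf N_t=\mathbf N_0+t\mathbf K$ with $\mathbf K$ bounded. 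As $\omega_0<\pi/2$, all eigenvalues $\lambda_k=\pm i\pi k/(2\omega_0)$ $(k\ge1)$ of the Dirichlet model problem satisfy $|\Im\lambda_k|>1>a$, so the line $\Im\lambda=1-b=-a$ carries no eigenvalue and $\mathbf N_t$ has the Fredholm property by Theorem~3.4 in~\cite{SkMs86}. The operator $\mathbf N_0$ is an isomorphism: its kernel is trivial (an $H_b^2(G)$ harmonic function vanishing on $\partial G$ has no admissible singular terms at $g_j$, hence lies in $W_2^2(G)$, hence vanishes), and $\codim\mathcal R(\mathbf N_0)=0$ because for every $f\in L_2(G)$ the Dirichlet problem has a solution in $W_2^2(G)\subset H_b^2(G)$, so $\{f\in L_2(G):(f,0,0)\in\mathcal R(\mathbf N_0)\}=L_2(G)$, and one concludes by the codimension‑comparison argument of item~4 of the proof of Lemma~\ref{lEx1_CodimUnstab}. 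By Theorem~\ref{thAppBTheorSec16Kr} there is $t_0>0$ with $\mathbf N_t$ an isomorphism for $|t|\le t_0$. In particular, if $u\in\ker\mathbf P_t$ with $|t|\le t_0$, then $u\in W_2^1(G)$, $\Delta u=0$, so $u\in H_b^2(G)$ by~\cite{GurTrMIRAN}, whence $u\in\ker\mathbf N_t=\{0\}$ and $\dim\ker\mathbf P_t=0$.

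Next, fix $t$ with $0<|t|\le t_0$ and for $f\in L_2(G)\subset H_b^0(G)$ put $u_f=\mathbf N_t^{-1}(f,0,0)\in H_b^2(G)$. I claim $f\in\mathcal R(\mathbf P_t)$ if and only if $u_f(\Omega(g_1))=u_f(\Omega(g_2))=0$. Any $W_2^1(G)$‑solution of~\eqref{eqEx3_61},~\eqref{eqEx3_62} with right‑hand side $f$ lies in $H_b^2(G)$ by~\cite{GurTrMIRAN} and hence equals $u_f$, so only the membership $u_f\in W_2^1(G)$ is in question. Near $g_j$ condition~\eqref{eqEx3_62} reads $u_f|_{\Gamma_1}=-t\,u_f(\Omega(y))|_{\Gamma_1}$, and since $u_f$ is a $W_2^2$‑function near the interior point $\Omega(g_j)$, this trace is continuous near $g_j$ with value $\mu_j:=-t\,u_f(\Omega(g_j))$ at $g_j$. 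Choosing cutoffs $\chi_j\in C_0^\infty$ with $\chi_j\equiv1$ near $g_j$ and disjoint small supports, let $w$ be the harmonic function with $w|_{\Gamma_1}=\mu_1\chi_1+\mu_2\chi_2$, $w|_{\Gamma_2}=0$; then $v:=u_f-w\in H_b^2(G)$ solves the Dirichlet problem with right‑hand side $f$ and boundary data vanishing at $g_1,g_2$, so, by the $W_2^2$‑theory for the Dirichlet problem in corner domains with opening less than $\pi$ (here $\omega_0<\pi/2$; see~\cite{KondrTMMO67}) and uniqueness in $H_b^2(G)$, $v\in W_2^2(G)\subset W_2^1(G)$. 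On the other hand, near $g_j$ the function $w-\mu_j\Phi(\omega)$, with $\Phi(\omega)=(\omega_0-\omega)/(2\omega_0)$ harmonic, $\Phi|_{\Gamma_1}=1$, $\Phi|_{\Gamma_2}=0$, is a bounded harmonic function vanishing on both sides of the angle and hence belongs to $W_2^1$, while $\mu_j\Phi(\omega)$ — a nonconstant function of the polar angle alone — does not belong to $W_2^1$ of any neighborhood of $g_j$ unless $\mu_j=0$. Therefore $u_f=v+w\in W_2^1(G)$ iff $\mu_1=\mu_2=0$ (in which case $w\equiv0$), i.e. iff $u_f(\Omega(g_1))=u_f(\Omega(g_2))=0$ (as $t\ne0$), which proves the claim. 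Consequently $\mathcal R(\mathbf P_t)=\ker L_t$, where $L_t:L_2(G)\to\mathbb C^2$, $L_tf=\bigl(-t\,u_f(\Omega(g_1)),-t\,u_f(\Omega(g_2))\bigr)$, is bounded (evaluation at interior points is continuous on $H_b^2(G)$), so $\codim\mathcal R(\mathbf P_t)=\rank L_t$.

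Finally I would check $\rank L_t\ge1$ for $0<|t|\le t_0$, shrinking $t_0$ if necessary. Write $L_t=-t\,M_t$ with $M_tf=\bigl(u_f(\Omega(g_1)),u_f(\Omega(g_2))\bigr)$; since $\mathbf N_t=\mathbf N_0+t\mathbf K$ with $\mathbf K$ bounded, the inverses $\mathbf N_t^{-1}$, and hence the operators $M_t:L_2(G)\to\mathbb C^2$, depend continuously on $t$ near $0$, and $M_0f=\bigl(\int_G\mathcal G(\Omega(g_1),\cdot)f,\ \int_G\mathcal G(\Omega(g_2),\cdot)f\bigr)$, where $\mathcal G$ is the Green function of the Dirichlet Laplacian on $G$. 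As $\Omega$ is a diffeomorphism, $\Omega(g_1)\ne\Omega(g_2)$, so $\mathcal G(\Omega(g_1),\cdot)$ and $\mathcal G(\Omega(g_2),\cdot)$ are linearly independent in $L_2(G)$ and $\rank M_0=2$; surjectivity onto $\mathbb C^2$ being an open condition, $\rank M_t=2$, hence $\rank L_t=2$, for all small $|t|$. Thus $\ind\mathbf P_t=\dim\ker\mathbf P_t-\codim\mathcal R(\mathbf P_t)=0-2=-2<0=\ind\mathbf P_0$ for $0<|t|\le t_0$. The main obstacle is the third paragraph: identifying precisely which right‑hand sides admit a $W_2^1(G)$‑solution, which forces one to work in a weighted space where the obstructing corner term $\mu_j\Phi(\omega)$ is present (this is exactly where $\omega_0<\pi/2$ enters, both to define $\mathbf N_t$ and to rule out competing singular behaviour) and to split that term cleanly from the $W_2^1$‑admissible remainder.
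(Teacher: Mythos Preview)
Your argument is correct and actually yields the sharper conclusion $\ind\mathbf P_t=-2$ for $0<|t|\le t_0$, whereas the paper only establishes $\ind\mathbf P_t<0$. Both proofs introduce the weighted operator $\mathbf M_t=\mathbf N_t:H_{a+1}^2(G)\to H_{a+1}^0(G)\times H_{a+1}^{3/2}(\Gamma_1)\times H_{a+1}^{3/2}(\Gamma_2)$ and use that it is an isomorphism for small $|t|$ (the paper cites this directly from Theorem~10.5 in~\cite{MP} for $t=0$, then perturbs). The difference lies in how the cokernel is handled. The paper argues by contradiction: it builds a specific $u\in H_{a+1}^2(G)$ satisfying~\eqref{eqEx3_62} with $u(\Omega(g_1))=1$, approximates $f=\Delta u$ by $f_n\in L_2(G)$, and shows that if $\codim\mathcal R(\mathbf P_t)=0$, then the generalized solutions $u_n$ would belong to $W_2^2(G)\subset C(\overline G)$, forcing $u_n(\Omega(g_1))=0$, contradicting $u_n(\Omega(g_1))\to 1$. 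Your route is direct: you identify $\mathcal R(\mathbf P_t)$ with the kernel of the bounded functional $L_t f=\bigl(u_f(\Omega(g_1)),u_f(\Omega(g_2))\bigr)$ via the corner decomposition $u_f=v+\mu_j\Phi(\omega)+\text{(remainder)}$, and then compute $\rank L_t=2$ using the Green function and continuity in $t$. Your approach is more informative but requires the extra work of the $W_2^1$-characterization in the third paragraph; the paper's approach is quicker but stops at $\codim\ge1$. For the kernel, the paper uses a maximum-principle argument (after showing $u\in W_2^2(G)\subset C(\overline G)$ via~\cite{GurDan04}) valid for all $0<|t|\le1$, while you deduce $\ker\mathbf P_t\subset\ker\mathbf N_t=\{0\}$, which only covers small $|t|$; both suffice for the theorem.
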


It is well known that the operator $\mathbf P_0$ is an
isomorphism; in particular,
\begin{equation}\label{eqEx3_IndP0=0}
\ind\mathbf P_0=0.
\end{equation}

Consider the operators $\mathbf P_t$. One and the same problem
\begin{equation}\label{eqEx1_61Lambda*}
\varphi''-\lambda^2\varphi=0,\quad |\varphi|<\omega_0,
\end{equation}
\begin{equation}\label{eqEx1_62Lambda**}
\varphi(-\omega_0)=\varphi(\omega_0)=0
\end{equation}
with the parameter $\lambda\in\mathbb C$ corresponds to the points
$g_1$ and $g_2$ (this problem is local because the nonlocal term
is supported outside the set $\{g_1,g_2\}$).

Straightforward calculation shows that the eigenvalues of
problem~\eqref{eqEx1_61Lambda*}, \eqref{eqEx1_62Lambda**} have the
form
\begin{equation}\label{eqEx3_Eig}
\lambda_k=\dfrac{\pi k}{2\omega_0}i,\quad k=\pm1,\pm2,\dots\,.
\end{equation}

\begin{lemma}\label{lEx3_kerPt=0}
We have $\dim\ker\mathbf P_t=0$ for $0<|t|\le 1$.
\end{lemma}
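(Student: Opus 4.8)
The plan is to upgrade the regularity of an element of $\ker\mathbf P_t$ near the conjugation points and then run a maximum‑principle argument in which the restriction $|t|\le 1$ enters decisively.

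First I would show that any $u\in\ker\mathbf P_t$ actually lies in $H_0^2(G)$. Such a $u$ is a generalized solution of~\eqref{eqEx3_61}, \eqref{eqEx3_62} with $f=0$, so~\cite{GurTrMIRAN} gives $u\in H_{a+1}^2(G)$ for all $a>0$. Since $0<\omega_0<\pi/2$, formula~\eqref{eqEx3_Eig} shows that the model problem~\eqref{eqEx1_61Lambda*}, \eqref{eqEx1_62Lambda**} has no eigenvalue in the strip $-1\le\Im\lambda\le 0$ (in particular none on $\Im\lambda=-1$); hence the asymptotic formula of Theorem~3.3 in~\cite{SkMs86}, applied with $f=0$, contains no singular terms and yields $u\in H_0^2(G)$. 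As $G$ is Lipschitz (a plane angle near each $g_j$ and smooth elsewhere), the embedding $W_2^2(G)\subset C(\overline G)$ gives $u\in C(\overline G)$, while membership in $H_0^2$ near $g_j$ forces $u(g_1)=u(g_2)=0$. Finally, because $\overline{\Omega(\Gamma_1)}\subset G$ and $u$ is continuous, $y\mapsto u(\Omega(y))$ is continuous on $\overline{\Gamma_1}$, so the nonlocal conditions~\eqref{eqEx3_62} hold pointwise on $\Gamma_1$ and $\Gamma_2$.

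Next I would apply the maximum principle. Since $t\in\mathbb R$, I may assume $u$ is real‑valued (both $\Re u$ and $\Im u$ lie in $\ker\mathbf P_t$). Put $M=\max_{\overline G}|u|$ and suppose, for contradiction, $M>0$; pick $y_0\in\overline G$ with $|u(y_0)|=M$. If $y_0\in\Gamma_2$ or $y_0\in\{g_1,g_2\}$, then $u(y_0)=0$, which is impossible. If $y_0\in\Gamma_1$, then~\eqref{eqEx3_62} gives $|u(\Omega(y_0))|=|u(y_0)|/|t|=M/|t|\ge M$ because $0<|t|\le 1$; since $\Omega(y_0)\in\Omega(\Gamma_1)\subset G$ and $M$ is the global maximum, necessarily $|u(\Omega(y_0))|=M$ with $\Omega(y_0)$ an interior point. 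Thus $|u|$ attains its maximum at an interior point of $G$, and since $u$ is harmonic and $G$ is connected the strong maximum principle forces $u\equiv\const$ on $\overline G$; then $u|_{\Gamma_2}=0$ gives $\const=0$, contradicting $M>0$. Hence $M=0$, i.e.\ $u\equiv 0$, so $\dim\ker\mathbf P_t=0$.

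The only genuinely delicate point is the regularity step: everything hinges on the strip $-1\le\Im\lambda\le 0$ being free of eigenvalues of the local model problem, which is exactly why $\omega_0<\pi/2$ is assumed, so that the solution extends continuously to $\overline G$ and vanishes at $g_1,g_2$. After that the argument is a routine application of the maximum principle, the one novelty being that the nonlocal condition on $\Gamma_1$ transfers a boundary maximum to an interior point precisely because $|t|\le 1$.
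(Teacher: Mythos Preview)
Your proof is correct and follows essentially the same approach as the paper: a regularity upgrade near the conjugation points followed by the same maximum-principle argument using $|t|\le 1$ to push a boundary maximum on $\Gamma_1$ into the interior. The only minor difference is in the regularity step---the paper invokes Theorem~1 in~\cite{GurDan04} to get $u\in W_2^2(G)$ directly, whereas you reach $u\in H_0^2(G)\subset W_2^2(G)$ via the weighted-space asymptotics (Theorem~3.3 in~\cite{SkMs86}); both routes exploit the absence of eigenvalues of~\eqref{eqEx1_61Lambda*}, \eqref{eqEx1_62Lambda**} in the strip $-1\le\Im\lambda\le 0$, which is exactly where $\omega_0<\pi/2$ is used.
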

\begin{proof}
Let $u\in \ker\mathbf P_t$. Since $0<\omega_0<\pi/2$, it follows
from~\eqref{eqEx3_Eig} that the strip $-1\le\Im\lambda<0$ contains
no eigenvalues of problem~\eqref{eqEx1_61Lambda*},
\eqref{eqEx1_62Lambda**}. Therefore, $u\in W_2^2(G)$ by Theorem~1
in~\cite{GurDan04}. It follows from Lemma~2.1
in~\cite{GurMatZam05} that the function $u$ is infinitely
differentiable outside an arbitrarily small neighborhood of the
set $\{g_1,g_2\}$. Thus, the Sobolev embedding theorem implies
$u\in C^\infty(G)\cap C(\overline{G})$.

Since $t\in\mathbb R$, it follows that the coefficients in
problem~\eqref{eqEx3_61}, \eqref{eqEx3_62} are real; hence, we can
assume without loss of generality that the function $u(y)$ is
real-valued. If the function $|u(y)|$ achieves its maximum inside
the domain $G$, then $u=\const$ in $\overline G$ by the maximum
principle; in this case, $u=0$ by the second condition
in~\eqref{eqEx3_62}. If $|u(y)|$ achieves its maximum on
$\Gamma_1$, then the first condition in~\eqref{eqEx3_62} and the
relation $|t|\le1$ imply that $|u(y)|$ achieves its maximum inside
the domain $G$; in this case, $u=0$ by what has been proved.
Finally, if $|u(y)|$ achieves its maximum on
$\overline{\Gamma_2}$, then $u=0$ by its continuity and by the
second condition in~\eqref{eqEx3_62}.
\end{proof}

\begin{lemma}\label{lEx3_codimPt>0}
There is a number $t_0>0$ such that $\codim\mathcal R(\mathbf
P_t)>0$ for $0<|t|\le t_0$.
\end{lemma}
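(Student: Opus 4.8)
The plan is to show that $\ind\mathbf P_t<0$ for small nonzero $t$ and combine this with Lemma~\ref{lEx3_kerPt=0}. Since $\ind\mathbf P_t=\dim\ker\mathbf P_t-\codim\mathcal R(\mathbf P_t)=0-\codim\mathcal R(\mathbf P_t)$, it suffices to prove $\ind\mathbf P_t<0$, equivalently $\ind\mathbf P_t\ne 0=\ind\mathbf P_0$. So the heart of the matter is to detect a jump in the index as $t$ leaves $0$. First I would fix a weight: since $0<\omega_0<\pi/2$, by~\eqref{eqEx3_Eig} there is $a>0$ so that the strip $-1\le\Im\lambda<a$ contains \emph{no} eigenvalue of the local model problem~\eqref{eqEx1_61Lambda*}, \eqref{eqEx1_62Lambda**}. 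Consequently every generalized solution $u\in W_2^1(G)$ of~\eqref{eqEx3_61}, \eqref{eqEx3_62} actually lies in $H_0^2(G)$ (argue as in item~3 of the proof of Lemma~\ref{lEx1_CodimUnstab}, using that the asymptotic expansion has no admissible terms), and conversely. Hence $\mathbf P_t$ may be identified, at the level of index, with the bounded Fredholm operator
$$
\mathbf N_t: H_0^2(G)\to H_0^0(G)\times H_0^{3/2}(\Gamma_1)\times H_0^{3/2}(\Gamma_2),\qquad
\mathbf N_t u=(\Delta u,\ u|_{\Gamma_1}+t\,u(\Omega(y))|_{\Gamma_1},\ u|_{\Gamma_2}),
$$
via the same identity $\mathcal R(\mathbf P_t)=\{f:(f,0,0)\in\mathcal R(\mathbf N_t)\}$ together with $\codim\{f:(f,0,0)\in\mathcal R(\mathbf N_t)\}=\codim\mathcal R(\mathbf N_t)$, both proved exactly as in Lemma~\ref{lEx1_CodimUnstab} (items~3--4). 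So it is enough to show $\ind\mathbf N_t<\ind\mathbf N_0$ for small $t\ne 0$.

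Now the key point: the operator $u\mapsto u(\Omega(y))|_{\Gamma_1}$ is \emph{not} a small or compact perturbation from $H_0^2(G)$ into $H_0^{3/2}(\Gamma_1)$, because $\overline{\Omega(\Gamma_1)}$ lies strictly inside $G$, where $H_0^2$ functions are merely $W_2^2$ and the trace on $\Omega(\Gamma_1)$ picks up one full weight power near $g_1,g_2$ — this is precisely the mechanism flagged in the Introduction. To exploit this I would instead view $\mathbf N_t$ as a perturbation of $\mathbf N_0$ \emph{measured in a coarser target space}. By Lemma~\ref{lPsi=P1+Phi} the embedding $W_2^{3/2}(\Gamma_1)\subset H_{a+1}^{3/2}(\Gamma_1)$ is bounded, and the interior trace $u\mapsto u(\Omega(y))|_{\Gamma_1}$ is bounded from $H_0^2(G)$ into $W_2^{3/2}(\Omega(\Gamma_1))\cong W_2^{3/2}(\Gamma_1)$ since $\Omega(\Gamma_1)$ is a compact curve in $G$ away from $g_1,g_2$. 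Hence $t\,u(\Omega(y))|_{\Gamma_1}$ is bounded from $H_0^2(G)$ into $H_{a+1}^{3/2}(\Gamma_1)$ with norm $O(|t|)$. I would therefore compare $\mathbf N_t$ with the operator $\widehat{\mathbf N}{}_0$ acting into the weighted target $H_0^0(G)\times H_{a+1}^{3/2}(\Gamma_1)\times H_0^{3/2}(\Gamma_2)$, whose index is strictly smaller than $\ind\mathbf N_0$ because the larger (weaker) space $H_{a+1}^{3/2}$ in the second slot enlarges the cokernel: concretely, $H_{a+1}^{3/2}(\Gamma_1)$ contains the singular trace $\varphi_0(\omega)|_{\Gamma_1}$-type functions that $H_0^{3/2}(\Gamma_1)$ does not, and these are not attained by $\Delta u$ with $u\in H_0^2$ and $u|_{\Gamma_2}=0$. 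Making this precise is the main obstacle.

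The cleanest way to pin down that strict inequality, and the step I expect to be delicate, is to compute both indices via the Šapiro–Lopatinskii / Kondrat'ev count. For the \emph{local} Dirichlet-type problem $(\Delta u, u|_{\Gamma_1}, u|_{\Gamma_2})$ one knows $\mathbf P_0$ is an isomorphism, so $\ind\mathbf N_0=0$; replacing the $\Gamma_1$-target by the weighted space $H_{a+1}^{3/2}$ changes the index by exactly the total multiplicity of eigenvalues of the model problem~\eqref{eqEx1_61Lambda*}, \eqref{eqEx1_62Lambda**} in the strip $0\le\Im\lambda<a$ — but by our choice of $a$ there are none, so in fact $\ind\widehat{\mathbf N}{}_0=0$ as well, and this route alone does not produce a jump. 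Hence the jump must come genuinely from the \emph{nonlocal} term and not be visible after passing to a weaker space. I would therefore abandon the "weaken the target" idea and instead argue directly: take the eigenvector $\varphi_0(\omega)=-\tfrac{t}{\omega_0}\omega+1$ from~\eqref{eqEx1_EigVect} (with the sign appropriate to~\eqref{eqEx3_62}, i.e.\ the solution of the \emph{nonlocal} model problem $\varphi(-\omega_0)+t\varphi(\Omega\text{-rotated point})=0$, $\varphi(\omega_0)=0$), build from it, in a neighborhood of each $g_j$, a function $u_t\sim c_j\,\varphi_0(\omega)$ that solves~\eqref{eqEx3_61}, \eqref{eqEx3_62} with a right-hand side in $L_2(G)$ but with $u_t\notin W_2^2(G)$ (because $\varphi_0\notin W_2^1$ near $g_j$ once its $\omega$-linear part is nonzero), and show this family spans a subspace of $\mathcal R(\mathbf P_t)^{\perp}$ of positive dimension. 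Equivalently, I would compute $\ind\mathbf P_t$ by the Skubachevskii formula (Theorem~3.3--3.4 in~\cite{SkMs86}): as $t$ crosses $0$, the eigenvalue $\lambda_0=0$ of the \emph{nonlocal} model problem — which for $t=0$ is absent from the list~\eqref{eqEx3_Eig} — enters the strip $-1\le\Im\lambda<0$ (or its eigenvector loses $W_2^1$-regularity), forcing $\ind\mathbf P_t=\ind\mathbf P_0-1=-1$ for small $t\ne0$. Combined with $\dim\ker\mathbf P_t=0$ from Lemma~\ref{lEx3_kerPt=0}, this gives $\codim\mathcal R(\mathbf P_t)=1>0$, completing the proof.
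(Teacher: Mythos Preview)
Your proposal contains a genuine gap: the final ``direct'' argument invokes a \emph{nonlocal} model problem at $g_1,g_2$ whose eigenvalue $\lambda_0=0$ is supposed to appear for $t\ne0$, but no such problem exists here. Because $\overline{\Omega(\Gamma_1)}\subset G$, the transformation $\Omega$ does not fix $g_1$ or $g_2$; the nonlocal term is supported away from the corners, so the model problem at each $g_j$ is the \emph{local} Dirichlet problem~\eqref{eqEx1_61Lambda*}, \eqref{eqEx1_62Lambda**} for \emph{every} $t$. Its spectrum is~\eqref{eqEx3_Eig}, independent of $t$, and contains no $\lambda_0=0$. Consequently the eigenvector $\varphi_0$ from~\eqref{eqEx1_EigVect} is irrelevant, the asymptotic expansion near $g_j$ has no power-zero term to lose $W_2^1$-regularity, and the Skubachevskii index formula gives the same answer for all $t$. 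Your own earlier observation---that passing to the weaker target $H_{a+1}^{3/2}(\Gamma_1)$ produces no index shift because there are no eigenvalues in $0\le\Im\lambda<a$---is exactly the reason this corner-asymptotics route cannot detect the jump. You also implicitly use $\mathbf N_t:H_0^2(G)\to H_0^0(G)\times H_0^{3/2}(\Gamma_1)\times H_0^{3/2}(\Gamma_2)$, but for $t\ne0$ this map is not even well defined: a generic $u\in H_0^2(G)$ has $u(\Omega(g_1))\ne0$, so $u(\Omega(\cdot))|_{\Gamma_1}\notin H_0^{3/2}(\Gamma_1)$.

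The mechanism in the paper is different and does not go through the corner model problem at all. One works with $\mathbf M_t:H_{a+1}^2(G)\to H_{a+1}^0(G)\times H_{a+1}^{3/2}(\Gamma_1)\times H_{a+1}^{3/2}(\Gamma_2)$ for some $0<a<\pi/(2\omega_0)$; here the nonlocal term \emph{is} bounded (by your own estimate via Lemma~\ref{lPsi=P1+Phi}), $\mathbf M_0$ is an isomorphism, and hence so is $\mathbf M_t$ for $|t|\le t_0$. The obstruction to surjectivity of $\mathbf P_t$ is then a \emph{compatibility condition at $g_1$}: any $u\in W_2^2(G)\subset C(\overline G)$ satisfying~\eqref{eqEx3_62} has $u(g_1)=0$ (continuity from $\Gamma_2$) and hence $u(\Omega(g_1))=0$ when $t\ne0$ (continuity from $\Gamma_1$). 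One constructs an explicit $u\in H_{a+1}^2(G)$ satisfying~\eqref{eqEx3_62} with $u(\Omega(g_1))=1$, sets $f=\Delta u\in H_{a+1}^0(G)$, approximates $f$ by $f_n\in L_2(G)$, and---assuming $\codim\mathcal R(\mathbf P_t)=0$---solves $\mathbf P_t u_n=f_n$; since each $u_n\in W_2^2(G)$ one gets $u_n(\Omega(g_1))=0$, while the isomorphism $\mathbf M_t$ forces $u_n\to u$ in $H_{a+1}^2(G)$ and hence $u_n(\Omega(g_1))\to1$, a contradiction. This compatibility-at-the-corner idea is what your proposal is missing.
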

\begin{proof}
1. Consider the bounded operator
$$
\mathbf M_t: H_{a+1}^{2}(G)\to H_{a+1}^0(G)\times
H_{a+1}^{3/2}(\Gamma_1)\times H_{a+1}^{3/2}(\Gamma_2),\quad a>0,
$$
given by
$$
 \mathbf M_t=(\Delta u,\ u|_{\Gamma_1}+t
u(\Omega(y))|_{\Gamma_1},\ u|_{\Gamma_2}).
$$

Since the operator of embedding $W_2^{3/2}(\Gamma_1)\subset
H_{a+1}^{3/2}(\Gamma_1)$ is bounded by Lemma~\ref{lPsi=P1+Phi} and
$\overline{\Omega(\Gamma_1)}\subset G$, it follows that
\begin{equation}\label{eqEx3_kerPt=0_60}
\|u(\Omega(y))\|_{H_{a+1}^{3/2}(\Gamma_1)}\le
k_1\|u(y')\|_{W_2^{3/2}(\Omega(\Gamma_1))}\le
k_2\|u\|_{H_{a+1}^2(G)}.
\end{equation}
Therefore,
$$
\mathbf M_tu\in H_{a+1}^0(G)\times H_{a+1}^{3/2}(\Gamma_1)\times
H_{a+1}^{3/2}(\Gamma_2)
$$
whenever $u\in H_{a+1}^2(G)$ and $a>0$. Thus, the operator
$\mathbf M_t$ is well defined.

By Theorem~10.5 in~\cite{MP}, the local operator $\mathbf M_0$ is
an isomorphism for
\begin{equation}\label{eqEx3_kerPt=0_60'}
0<a<\pi/(2\omega_0).
\end{equation}

Fix a number $a$ satisfying~\eqref{eqEx3_kerPt=0_60'}. Since
$\mathbf M_0$ is an isomorphism and
estimate~\eqref{eqEx3_kerPt=0_60} is true, it follows that the
operator $\mathbf M_t$ is also an isomorphism for $0\le |t|\le
t_0$, provided $t_0=t_0(a)$ is sufficiently small.

2. Let us construct a function $u\in H_{a+1}^2(G)$ satisfying
nonlocal conditions~\eqref{eqEx3_62} and such that $$
u(\Omega(g_1))=1. $$ To this end, we consider a function $v\in
C^\infty(G)$ such that $v(y)=1$ for
$y\in\overline{\Omega(\Gamma_1)}$ and $\supp v\subset G$. In this
case, we have $v(\Omega(y))=1$ for $y\in\Gamma_1$; therefore, $
v(\Omega(y))|_{\Gamma_1}\in H_{a+1}^{3/2}(\Gamma_1). $

Further, we consider a function $w\in H_{a+1}^2(G)$ such that
$$
w|_{\Gamma_1}=-t v(\Omega(y))|_{\Gamma_1},\quad
w|_{\Gamma_2}=0,\qquad \supp
w\cap\overline{\Omega(\Gamma_1)}=\varnothing
$$
(the existence of such a function $w$ follows from Lemma~3.1
in~\cite{MP}). One can easily see that $u=v+w$ is the desired
function (see Fig.~\ref{figEx3}).

3. We approximate the function $f=\Delta u\in H_{a+1}^0(G)$ by the
functions $f_n\in L_2(G)$, $n=1,2,\dots$, in the space
$H_{a+1}^0(G)$:
\begin{equation}\label{eqEx3_kerPt=0_61}
\|f_n-f\|_{H_{a+1}^0(G)}\to0,\quad n\to\infty.
\end{equation}
If $\codim\mathcal R(\mathbf P_t)=0$ for $0<|t|\le t_0$, then, for
each function $f_n\in L_2(G)$, there exists a generalized solution
$u_n\in W_2^1(G)$ of problem~\eqref{eqEx3_61}, \eqref{eqEx3_62}
with the right-hand side $f_n$ (this solution is unique by
Lemma~\ref{lEx3_kerPt=0}). Moreover, $u_n\in H_{a+1}^2(G)$,
see~\cite{GurTrMIRAN}.

It follows from the fact that $\mathbf M_t$ is an isomorphism and
from~\eqref{eqEx3_kerPt=0_61} that
$$
\|u_n-u\|_{H_{a+1}^2(G)}\le k_3\|f_n-f\|_{H_{a+1}^0(G)}\to0,\quad
n\to\infty.
$$
Therefore, by the Sobolev embedding theorem, we have
\begin{equation}\label{eqEx3_kerPt=0_62}
u_n(\Omega(g_1))\to u(\Omega(g_1))=1,\quad n\to\infty.
\end{equation}
On the other hand, the strip $-1\le\Im\lambda<0$ contains no
eigenvalues of problem~\eqref{eqEx1_61Lambda*},
\eqref{eqEx1_62Lambda**}, and hence $u_n\in W_2^2(G)$ by Theorem~1
in~\cite{GurDan04}. By the Sobolev embedding theorem, $u_n\in
C(\overline{G})$, and it follows from the second nonlocal
condition in~\eqref{eqEx3_62} that $u_n(g_1)=0$. The first
nonlocal condition in~\eqref{eqEx3_62} now implies that $
u_n(\Omega(g_1))=0 $ (for $t\ne0$), which
contradicts~\eqref{eqEx3_kerPt=0_62}. Thus, we have proved that
$\codim\mathcal R(\mathbf P_t)>0$.
\end{proof}

Theorem~\ref{thEx3_IndUnstab} follows from~\eqref{eqEx3_IndP0=0}
and from Lemmas~\ref{lEx3_kerPt=0} and~\ref{lEx3_codimPt>0}.

\begin{remark} Let $\mathbf I$ denote the identity operator on $L_2(G)$, and
let $\lambda\in \mathbb C$. It is proved in~\cite{GurTrMIRAN} that
low-order terms in elliptic equation have no influence on the
index of the nonlocal operator $\mathbf P_t$. Therefore,
$$
\ind(\mathbf
P_t-\lambda\mathbf I)=\ind\mathbf P_t<0
$$
for $0<|t|\le t_0$, where $t_0>0$ is sufficiently small. Thus, the
spectrum of $\mathbf P_t$ for $0<|t|\le t_0$ coincides with the
whole complex plane.
\end{remark}

The author is grateful to A.~L.~Skubachevskii for attention.


\begin{thebibliography}{99}


\bibitem{BitzSam}
A.~V.~Bitsadze, A.~A.~Samarskii, ``On some simple generalizations
of linear elliptic boundary value problems,'' {\it Dokl. Akad.
Nauk SSSR., \bf 185}, 739--740 (1969); English transl.: {\it
Soviet Math. Dokl., \bf 10} (1969).

\bibitem{Carleman}
T.~Carleman, ``Sur la th\'eorie des equations integrales et ses
applications,'' {\it Verhandlungen des Internat. Math. Kongr.
Z\"urich., \bf 1}, 132--151 (1932).

\bibitem{GurDan04}
P.~L.~Gurevich, ``Smoothness of Generalized Solutions to Nonlocal
Elliptic Problems on the Plane,'' {\it Dokl. Akad. Nauk} {\bf
398}, No.~3, 295--299 (2004); English transl.: {\it Russian Acad.
Sci. Dokl. Math.}, {\bf 70} (2004).


\bibitem{GurMatZam05}
P.~L.~Gurevich, ``Generalized solutions of nonlocal elliptic
problems'' {\it Mat. Zametki}, {\bf 77}, No.~5, 665--682 (2005);
English transl.: {\it Math. Notes}, {\bf 77}, No.~5-6 (2005).


\bibitem{GurTrMIRAN}
P.~L.~Gurevich, ``On the Stability of Index of Unbounded Nonlocal
Operators in Sobolev Spaces'' To be published in {\it Tr. Mat.
Inst. Steklova}; English transl.: {\it Proc. Steklov Inst. Math.}


\bibitem{KondrTMMO67}
V.~A.~Kondrat'ev, ``Boundary value problems for elliptic equations
in domains with conical or angular points,'' {\it Trudy Moskov.
Mat. Obshch., \bf 16}, 209--292 (1967); English transl.: {\it
Trans. Moscow Math. Soc., \bf 16} (1967).


\bibitem{Kr}
S.~G.~Krein {\it Linear Equations in Banach Spaces}, Nauka,
Moscow, 1971 [in Russian]; English transl.: Birkh\"auser, Boston,
1982.



\bibitem{MP}
V.~G.~Maz'ya, B.~A.~Plamenevskii, ``$L_p$-estimates of solutions
of elliptic boundary value problems in domains with edges,'' {\it
Tr. Mosk. Mat. Obs., \bf 37}, 49--93 (1978). English transl.: {\it
Trans. Moscow Math. Soc., \bf 37} (1980).


\bibitem{SkMs86}
A.~L.~Skubachevskii, ``Elliptic problems with nonlocal conditions
near the boundary,'' {\it Mat. Sb., \bf 129~\rm(\bf 171\rm)},
279--302 (1986); English transl.: {\it Math. USSR Sb., \bf 57}
(1987).


\bibitem{SkDu90}
A.~L.~Skubachevskii, ``Model nonlocal problems for elliptic
equations in dihedral angles,'' {\it Differentsial'nye Uravneniya,
\bf 26}, 119--131 (1990); English transl.: {\it Differ. Equ., \bf
26} (1990).

\bibitem{SkDu91}
A.~L.~Skubachevskii, ``Truncation-function method in the theory of
nonlocal problems,'' {\it Differentsial'nye Uravneniya, \bf 27},
128--139 (1991); English transl.: {\it Diff. Equ., \bf 27} (1991).


\bibitem{SkBook}
A.~L.~Skubachevskii, {\it Elliptic Functional Differential
Equations and Applications}, Basel--Boston--Berlin, Birkh\"auser,
1997.

\end{thebibliography}
\end{document}